\begin{document}
\title[A Nonlinear Third Order Differential and Applications]
{L$^p$-Solutions of a Nonlinear Third Order Differential Equation and
Asymptotic Behavior of Linear Fourth Order
Differential Equations}

\author[A. Coronel, F. Huancas, M. Pinto ]
{An{\'\i}bal Coronel, Fernando Huancas, Manuel Pinto}  

\address{An{\'\i}bal Coronel, \newline
GMA, Departamento de Ciencias B\'asicas,\newline
Facultad de Ciencias, Universidad del B\'{\i}o-B\'{\i}o,\newline
Campus Fernando May, Chill\'{a}n, Chile}
\email{acoronel@ubiobio.cl}

\address{Fernando Huancas, \newline
GMA, Departamento de Ciencias B\'asicas,\newline
Facultad de Ciencias, Universidad del B\'{\i}o-B\'{\i}o,\newline
Campus Fernando May, Chill\'{a}n, Chile}
\email{fihuanca@gmail.com}

\address{Manuel Pinto,\newline
Departamento de Matem\'atica,\newline
Facultad de Ciencias, Universidad de Chile,\newline
Santiago, Chile}
\email{pintoj@uchile.cl}

\thanks{\today}
\subjclass[2010]{34E10, 34E05,34E99}
\keywords{Poincar\'e-Perron problem; asymptotic behavior; scalar method}

\begin{abstract}
In this paper we prove the well-posedness and we study the asymptotic behavior
of nonoscillatory $L^p$-solutions for a third order nonlinear scalar  differential
equation. The equation consists of two parts: a linear third order with constant
coefficients part and a nonlinear part represented by a polynomial
of fourth order in three variables with variable coefficients. The results
are obtained assuming three hypotheses: (i) the characteristic polynomial
associated with the linear part has simple and real roots, (ii) the coefficients
of the polynomial satisfy asymptotic integral smallness conditions, and
(iii) the polynomial coefficients are in $L^p([t_0,\infty[)$. 
These results are applied to study a fourth order linear differential equation of Poincar\'e
type and a fourth order linear differential equation with unbounded coefficients.
Moreover, we give some examples where the classical theorems can not be applied.
\end{abstract}

\maketitle
\numberwithin{equation}{section}
\newtheorem{theorem}{Theorem}[section]
\newtheorem{proposition}[theorem]{Proposition}
\newtheorem{definition}{Definition}[section]
\newtheorem{lemma}{Lemma}[section]
\allowdisplaybreaks

\section{Introduction}

The study of asymptotic
behavior for linear ordinary differential equations has been 
achieved their breakthroughs in the twentieth century,
see \cite{coronel_ejqde_2015} for a short historical review 
of the main landmarks, see also
\cite{bellman_book,codilevi_book,coppel_book,eastham_book,
fedoryuk_book}. In \cite{coronel_ejqde_2015}
is revised the contributions of Poincar\'e \cite{poincare_1885},
Perron \cite{perron_1909}, Levinson \cite{levinson1948}, 
Hartman-Wintner \cite{hartman_wintner_1955} and Harris and Lutz
\cite{harris_lutz_1977,harris_lutz_1974}. Now,
it is undisputed in the recent years the increasing interest in the
development of the theory for the asymptotic analysis
of high order linear differential equations, see for instance
\cite{coronel_ejqde_2015,figueroa_2008,figueroa_2010,figueroa_2015,
barbara_2013,stepin_2005,stepin_2010}. The main common motivation of 
these works come from two sources: the real world applications
and the deepening of the theory. In particular, in the 
scalar method there is an interesting problem of the second kind,
where the analysis
of the asymptotic behavior of a linear equation is obtained via 
the analysis of a particular nonlinear equation.
Thus, a natural question is to study the general form of 
that nonlinear equation.

In this paper we are interested in the study
of $L^p$-solutions and the asymptotic behavior
of the following third order nonlinear differential equation
\begin{subequations}
 \label{eq:general_no_lin}
\begin{eqnarray}
 z'''(t)+ b_2 z''(t)+b_1 z'(t)+b_0 z(t)=\mathbb{P}(t,z(t),z'(t),z''(t)),
 \label{eq:general_no_lin:1}
\end{eqnarray}
where  $b_i$ are real constants and $\mathbb{P}:\mathbb{R}^4\to\mathbb{R}$ is a given function
such that   $\mathbb{P}(x_0,x_1,x_2,x_3)$ is a polynomial of fourth degree in the 
three variables $(x_1,x_2,x_3)$ and its coefficients depends on $x_0$, i.e.
 $\mathbb{P}$ admits the representation 
\begin{eqnarray}
\mathbb{P}(x_0,\mathbf{x})=\sum_{|\boldsymbol{\alpha}|=0}^{4}
\Omega_{\boldsymbol{\alpha}}(x_0)\mathbf{x}^{\boldsymbol{\alpha}},
\quad
\mbox{with}
\quad
\mathbf{x}=(x_1,x_2,x_3),
\quad
\boldsymbol{\alpha}=(\alpha_1,\alpha_2,\alpha_3)\in\{0,1,2,3,4\}^3.
 \label{eq:general_no_lin:2}
\end{eqnarray}
\end{subequations}
Here, we have used the notation $\{0,1,2,3,4\}^3$ for the cartesian product
and $|\boldsymbol{\alpha}|$ and $\mathbf{x}^{\boldsymbol{\alpha}}$
for the standard multindex notation, i.e. 
$|\boldsymbol{\alpha}|=\alpha_1+\alpha_2+\alpha_3$ and 
$\mathbf{x}^{\boldsymbol{\alpha}}=x_1^{\alpha_1}x_2^{\alpha_2}x_3^{\alpha_3}$.

The analysis of \eqref{eq:general_no_lin} is mainly motivated by the application 
of the scalar method to study the asymptotic behavior of nonoscillatory 
solutions for a fourth order linear differential equations of Poincar\'e type:
\begin{eqnarray}
y^{({\rm iv})}
+[a_3+r_3 (t)] y'''
+[a_2+r_2 (t)] y''
+[a_1+r_1 (t)] y'
+[a_0+r_0 (t)] y=0,
\label{eq:intro_uno_intro}
\end{eqnarray}
where $a_i$ are constants and 
$r_i$ are real-valued functions.
Indeed, we recall that the scalar method consists of three big steps:
(i) a change of variable of the type $z(t)=y'(t)[y(t)]^{-1}-\mu$ with
$\mu$ a characteristic root associated to \eqref{eq:intro_uno_intro} when
$r_0=r_1=r_2=r_3=0$,
(ii) the analysis of existence, uniqueness and
asymptotic behavior of the new nonlinear equation in terms of $z$
which is given by 
\begin{align}
&z'''(t)+[4\mu  +a_3]z''(t)
    +[6\mu ^2+3a_3\mu  +a_2]z'(t)
    +[4\mu  ^3+3\mu  ^2a_3+2\mu  a_2
    +a_1]z(t) 
    \nonumber\\
    & \qquad
   =-\Big\{r_3(t)z''(t)+[3\mu  r_3(t)+r_2(t)]z'(t)
   +[3\mu  ^2r_3(t)+2\mu  r_2(t)+r_1(t)]z(t)
   \nonumber\\
   & \qquad\qquad
   +\mu  ^3r_3(t) +\mu  ^2r_2(t)+\mu  r(t)+r_0(t)
   +4z(t)z''(t)+[12\mu    +3a_3+3r_3(t)]z(t)z'(t)
    \nonumber\\
    &\qquad\qquad
   +6z(t)^2z'(t)    
   +3[z'(t)]^2+[6\mu  ^2+3\mu  a_3+a_2
   +3\mu  r_3(t)+r_2(t)]z(t)^2
    \nonumber\\
   & \qquad\qquad  
   +[4\mu  +r_3(t)]z(t)^3+z(t)^4
   \Big\},
   \label{eq:ricati_type_original}
\end{align}
and (iii) the translation of the results from $z$ to the original
variable via the relation $y(t)=\exp\Big(\int_{t_0}^t (z(s)+\mu)ds\Big)$.
Now, we note that the equation
\eqref{eq:ricati_type_original}
is of the type \eqref{eq:general_no_lin}. 

Recently, in \cite{coronel_ejqde_2015} the analysis of \eqref{eq:ricati_type_original} 
in the context of $C^2_0$-solutions was developed
by assuming  three hypotheses. The first hypothesis
is related to the constant coefficients $a_i$ and set that
the characteristic polynomial associated with 
the homogeneous equation for \eqref{eq:ricati_type_original} (i.e. when
$r_0=r_1=r_2=r_3=0$)
has simple and real roots. The other two hypotheses 
are related to the behavior of 
the perturbation functions $r_i$ and establish
asymptotic integral smallness conditions 
of the perturbations.  
Now, under these general hypotheses and by application 
of a fixed point argument the authors prove that \eqref{eq:ricati_type_original} 
has  a unique solution in 
\begin{eqnarray*}
C_0^2([t_0,\infty[)
=\Big\{z\in C^2([t_0,\infty[,\mathbb{R})\quad :\quad
z,z',z''\to 0\text{ when }t\to\infty\Big\},
\quad t_0\in\mathbb{R}
\end{eqnarray*}
and also obtain  the 
asymptotic behavior
of the solutions for \eqref{eq:ricati_type_original}. Moreover,
they  establish the existence 
of a fundamental system of solutions and precise 
the formulas for the asymptotic behavior of 
the linear fourth order  differential equation \eqref{eq:intro_uno_intro}.

In this paper we prove the existence, uniqueness 
and asymptotic behavior for \eqref{eq:general_no_lin}.
Our results improve our previous results obtained 
in \cite{coronel_ejqde_2015} for \eqref{eq:ricati_type_original}, 
since the equation \eqref{eq:ricati_type_original} is a particular
case of the equation \eqref{eq:general_no_lin}. 
More precisely, we deduce the well posedness
in $C^2_0([t_0,\infty[)$ by assuming that
\begin{itemize}
\item  The roots $\gamma_1,\gamma_2$ and $\gamma_2$
of  the corresponding characteristic
polynomial associated to 
$z'''+  b_2 z''+b_1 z'+b_0 z=0$
(the homogeneous part of \eqref{eq:general_no_lin})
are real and simple.

\item The coefficients $\Omega_{\boldsymbol{\alpha}}$
satisfy the following requirements
\begin{eqnarray*}
&&\lim_{t\to\infty}
\left|\int_{t_0}^{\infty}g(t,s)\Omega_{\boldsymbol{\alpha}}(s)ds\right|
	+\left|\int_{t_0}^{\infty}\frac{\partial g}{\partial t}(t,s)
	\Omega_{\boldsymbol{\alpha}}(s)ds\right|
	+\left|\int_{t_0}^{\infty}\frac{\partial^2 g}{\partial t^2}(t,s)
	\Omega_{\boldsymbol{\alpha}}(s)ds\right|=0,
\\
&&
\lim_{t\to\infty}
\int_{t_0}^{\infty}\left[|g(t,s)|
	+\left|\frac{\partial g}{\partial t}(t,s)\right|
	+\left|\frac{\partial^2 g}{\partial t^2}(t,s)\right|
	\right]\Big|\sum_{|\boldsymbol{\alpha}|=1}
	\Omega_{\boldsymbol{\alpha}}(s)\Big|ds=0,
\\
&&\sum_{k=1}^4
\int_{t_0}^{\infty}\left[|g(t,s)|
	+\left|\frac{\partial g}{\partial t}(t,s)\right|
	+\left|\frac{\partial^2 g}{\partial t^2}(t,s)\right|
	\right]\Big|\sum_{|\boldsymbol{\alpha}|=k}
	\Omega_{\boldsymbol{\alpha}}(s)\Big|ds
	\quad
	\mbox{is bounded when $t\to\infty$.}
\end{eqnarray*}
Here $g$ is a Green function.
\end{itemize}
Now, considering that  
\begin{eqnarray*}
\sum_{|\boldsymbol{\alpha}|=1}^4\Big|\Omega_{\boldsymbol{\alpha}}(s)\Big|\le 
\rho
\quad
\mbox{ for }\rho\in \left]0,\frac{1}{\upvarsigma\hat{A}}\right[,
\end{eqnarray*}
with $\sigma$ and  $\hat{A}$
a given numbers in terms of $\gamma_1,\gamma_2,\gamma_3$,
we get that 
\begin{eqnarray*}
z(t),z'(t),z''(t)
=
\left\{
\begin{array}{ll}
\displaystyle
O\Big(\int_{t}^{\infty}e^{-\beta(t-s)}
\Big|\sum_{|\boldsymbol{\alpha}|=0}\Omega_{\boldsymbol{\alpha}}(s)\Big|
ds\Big),
& (\gamma_1,\gamma_2,\gamma_3)\in \mathbb{R}^3_{---},\;\, \beta\in ]\gamma_1,0[,
\\
\displaystyle
O\Big(\int_{t_0}^{\infty}e^{-\beta(t-s)}
\Big|\sum_{|\boldsymbol{\alpha}|=0}\Omega_{\boldsymbol{\alpha}}(s)\Big|
ds\Big),
& (\gamma_1,\gamma_2,\gamma_3)\in \mathbb{R}^3_{+--},\;\, \beta\in ]\gamma_2,0[,
\\
\displaystyle
O\Big(\int_{t_0}^{\infty}e^{-\beta(t-s)}
\Big|\sum_{|\boldsymbol{\alpha}|=0}\Omega_{\boldsymbol{\alpha}}(s)\Big|
ds\Big),
& (\gamma_1,\gamma_2,\gamma_3)\in \mathbb{R}^3_{++-},\;\, \beta\in ]\gamma_3,0[,
\\
\displaystyle
O\Big(\int_{t_0}^{t}e^{-\beta(t-s)}
\Big|\sum_{|\boldsymbol{\alpha}|=0}\Omega_{\boldsymbol{\alpha}}(s)\Big|
ds\Big),
& (\gamma_1,\gamma_2,\gamma_3)\in \mathbb{R}^3_{+++},\;\, \beta\in ]0,\gamma_3[,
\end{array}
\right.
\;\;\;{}
\end{eqnarray*}
which represents the asymptotic behavior of the solutions
for \eqref{eq:general_no_lin}. 
Moreover, assuming that 
\begin{itemize}
 \item
The coefficients $\Omega_{\boldsymbol{\alpha}}$ of $\mathbb{P}$
are such that $\Omega_{\boldsymbol{\alpha}}\in L^p([t_0,\infty[)$
for $|\boldsymbol{\alpha}|=0$ and  for  $|\boldsymbol{\alpha}|\ge 1$
the functions  $\Omega_{\boldsymbol{\alpha}}$ are of the following type
$
 \Omega_{\boldsymbol{\alpha}}(t)=
 \lambda_{\boldsymbol{\alpha},p}\Omega_{\boldsymbol{\alpha},p}(t)
 +\lambda_{\boldsymbol{\alpha},c},
$
where $\lambda_{\boldsymbol{\alpha},p}$ and $\lambda_{\boldsymbol{\alpha},c}$ are real constants
and $\Omega_{\boldsymbol{\alpha},p}\in L^p([t_0,\infty[)$,
\end{itemize}
we prove that the solution of \eqref{eq:general_no_lin}
is a function belongs of the Sobolev space $W^{2,p}([t_0,\infty[)$
and there exists $m+1$ functions with $m$ depending of $p$ denoted by
  $\Theta_{1},\ldots,\Theta_{m}$ and $\Psi,$  
  such that $\Theta_{k}\in W^{2,p/k}([t_0,\infty[),$
  for $k=1,\ldots,m$, 
  $\Psi\in W^{2,1}([0,\infty[)$ and
  $z^{(\ell)}(t)=\sum_{k=1}^m\Theta^{(\ell)}_{k}(t)+\Psi^{(\ell)}(t)$
  for $\ell=0,1,2.$

On the other side, in this paper
we consider two applications of the results. First,
assuming the hypotheses given in \cite{coronel_ejqde_2015} for $a_i$ and $r_i$ and using 
the fact that perturbation functions $r_i\in L^p([t_0,\infty[)$
we deduce the equation \eqref{eq:ricati_type_original}
has a solution belongs to the Sobolev space $W^{2,p}([t_0,\infty[)$.
Moreover, we obtain a Levison and Hartman-Wintner type results for the
asymptotic behavior of the solutions for \eqref{eq:intro_uno_intro}.
The second application is the study of the 
asymptotic behavior of the solutions for the following
fourth order differential equation
\begin{eqnarray}
y^{({\rm iv})}(t)
-2[q(t)]^{1/2} y'''(t)
-q(t) y''(t)
+2[q(t)]^{3/2} y'(t)
+r(t) y(t)=0,
\label{eq:unbounded_intro}
\end{eqnarray}
where $q$ and $r$ are real valued unbounded functions.
In this case, the
analysis is based in a change of variable, which transforms
\eqref{eq:unbounded_intro} in an equation of 
the type \eqref{eq:intro_uno_intro}.

The paper is organized as follows. 
In section~\ref{sect:analisis_of_general}, we   
develop the analysis of existence, uniqueness and
asymptotic behavior of \eqref{eq:general_no_lin} in  
$C_0^2([t_0,\infty[)$. Now, on section~\eqref{section:lpsolution}
we study the integrability of the
solutions for equation \eqref{eq:general_no_lin} and we obtain a result
for the asymptotic behavior of $L^p$-solutions.
In section~\ref{sec:applications} we present the applications 
of the results to the analysis of \eqref{eq:intro_uno_intro}
and \eqref{eq:unbounded_intro}. Finally, on section~\ref{sec:examples}
we give some examples.

\section{Analysis of equation \eqref{eq:general_no_lin} in $C_0^2([t_0,\infty[)$}
\label{sect:analisis_of_general}

In this section we present the results of well posedness and 
the asymptotic behavior of \eqref{eq:general_no_lin}.

\subsection{Existence and uniqueness of \eqref{eq:general_no_lin}}
Before to present the result of this subsection, we need to define 
some notations about Green functions. First, let us consider 
the equation associated
to \eqref{eq:general_no_lin:1} with $\mathbb{P}=0$, i.e.
\begin{eqnarray}
z'''+  b_2 z''+b_1 z'+b_0 z=0,
\label{eq:riccati_type_equiv_homo}
\end{eqnarray}
and denote by $\gamma_i,\; i=1,2,3,$ the roots of the  characteristic
polynomial for \eqref{eq:riccati_type_equiv_homo}. Then, the Green function for 
\eqref{eq:riccati_type_equiv_homo} is defined by 
\begin{eqnarray}
g(t,s)=\frac{1}{(\gamma_3-\gamma_2)(\gamma_3-\gamma_1)(\gamma_2-\gamma_1)}
\;
\left\{
\begin{array}{lcl}
 g_1(t,s), 
	& \quad & (\gamma_1,\gamma_2,\gamma_3)\in \mathbb{R}^3_{---},
 \\
 g_2(t,s), 
	& & (\gamma_1,\gamma_2,\gamma_3)\in \mathbb{R}^3_{+--},
 \\
 g_3(t,s), 
	& & (\gamma_1,\gamma_2,\gamma_3)\in \mathbb{R}^3_{++-},
 \\
 g_4(t,s), 
	& & (\gamma_1,\gamma_2,\gamma_3)\in \mathbb{R}^3_{+++}, 
\end{array}
\right.
\label{eq:green_function}
\end{eqnarray} 
where 
\begin{eqnarray}
g_1(t,s)&=&
\left\{
\begin{array}{lcl}
 0, & &t\ge s,
 \\ 
(\gamma_2-\gamma_3)e^{-\gamma_1(t-s)}
      +(\gamma_3-\gamma_1)e^{-\gamma_2(t-s)}
      +(\gamma_1-\gamma_2)e^{-\gamma_3(t-s)},
      & \quad & t\le s,
\end{array}
\right.
\qquad\mbox{$ $}
\label{eq:green_function:g1}
\\
g_2(t,s)&=&
\left\{
\begin{array}{lcl}
(\gamma_2-\gamma_3)e^{-\gamma_1(t-s)},
      & &t\ge s,
 \\      
(\gamma_1-\gamma_2)e^{-\gamma_3(t-s)}
      +(\gamma_3-\gamma_1)e^{-\gamma_2(t-s)},
      & \quad & t\le s,
\end{array}
\right.
\label{eq:green_function:g2}
\\
g_3(t,s)&=&
\left\{
\begin{array}{lcl}
(\gamma_2-\gamma_3)e^{-\gamma_1(t-s)}
      +(\gamma_3-\gamma_1)e^{-\gamma_2(t-s)},
      & \quad & t\ge s,
   \\      
(\gamma_2-\gamma_1)e^{-\gamma_3(t-s)},
      & &t\le s,
\end{array}
\right.
\label{eq:green_function:g3}
\\
g_4(t,s)&=&
\left\{
\begin{array}{lcl}
(\gamma_2-\gamma_3)e^{-\gamma_1(t-s)}
      +(\gamma_3-\gamma_1)e^{-\gamma_2(t-s)}
      +(\gamma_1-\gamma_2)e^{-\gamma_3(t-s)},
      & \quad & t\ge s,
 \\
 0, & &t\le s.
\end{array}
\right.
\label{eq:green_function:g4}
\end{eqnarray}
Further details on Green functions may be consulted in~\cite{bellman_book}.
Moreover, given $g$ by \eqref{eq:green_function}, we define the 
functionals  $\mathcal{G}$ and $\mathcal{L}$ as follows
\begin{eqnarray}
  \mathcal{G}(E)(t) 
  &=& \left|\int_{t_0}^{\infty}g(t,s)E(s)ds\right|
	+\left|\int_{t_0}^{\infty}\frac{\partial g}{\partial t}(t,s)E(s)ds\right|
	+\left|\int_{t_0}^{\infty}\frac{\partial^2 g}{\partial t^2}(t,s)E(s)ds\right|,
\label{eq:functyonal_G}
\\
  \mathcal{L}(E)(t) 
  &=& \int_{t_0}^{\infty}\left[|g(t,s)|
	+\left|\frac{\partial g}{\partial t}(t,s)\right|
	+\left|\frac{\partial^2 g}{\partial t^2}(t,s)\right|
	\right]|E(s)|ds.
\label{eq:functyonal_L}
\end{eqnarray}
Note that the inequality $0\le \mathcal{G}(E)(t)\le \mathcal{L}(E)(t)$
holds for all $t\ge t_0.$

\begin{theorem}
\label{thm:general_no_lin}
Let us introduce the notation $C_0^2([t_0,\infty[)$
for the following space of functions
\begin{eqnarray*}
C_0^2([t_0,\infty[)
=\Big\{z\in C^2([t_0,\infty[,\mathbb{R})\quad :\quad
z,z',z''\to 0\text{ when }t\to\infty\Big\},
\quad t_0\in\mathbb{R},
\end{eqnarray*}
and consider the equation \eqref{eq:general_no_lin}
where the constants $b_i$ and the coefficients of $\mathbb{P}$
satisfy the following two restrictions
\begin{enumerate}
\item[($P_1$)]  The roots $\gamma_1,\gamma_2$ and $\gamma_2$
of  the corresponding characteristic
polynomial associated to \eqref{eq:riccati_type_equiv_homo},
the homogeneous part of \eqref{eq:general_no_lin:1},
are real and simple.

\item[($P_2$)] The coefficients $\Omega_{\boldsymbol{\alpha}}$
of $\mathbb{P}$ are such that
\begin{eqnarray*}
&&\mathcal{G}\left(\sum_{|\boldsymbol{\alpha}|=0}
	\Omega_{\boldsymbol{\alpha}}\right)(t)\to 0,
\quad
\mathcal{L}\left(\sum_{|\boldsymbol{\alpha}|=1}
	|\Omega_{\boldsymbol{\alpha}}|\right)(t)\to 0
\quad
\mbox{and}
\\
&&
\sum_{k=1}^4
\mathcal{L}\left(\sum_{|\boldsymbol{\alpha}|=k}^4
	|\Omega_{\boldsymbol{\alpha}}|\right)(t)
\quad
\mbox{bounded when $t\to\infty$.}
\end{eqnarray*}
Here $\mathcal{G}$ and $\mathcal{L}$ are the operators defined on~\eqref{eq:functyonal_G}
and \eqref{eq:functyonal_L}.
\end{enumerate}
hold. Then, there is a unique  $z\in C_0^2([t_0,\infty[)$ solution 
of  \eqref{eq:general_no_lin}.
\end{theorem}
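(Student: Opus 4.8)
The plan is to recast \eqref{eq:general_no_lin} as a fixed point equation in the Banach space $C_0^2([t_0,\infty[)$ and apply the Banach contraction principle. First I would observe that, by the variation of parameters formula associated with the Green function $g$ defined in \eqref{eq:green_function}, a function $z\in C_0^2([t_0,\infty[)$ solves \eqref{eq:general_no_lin} if and only if it is a fixed point of the operator
\begin{eqnarray*}
(\mathcal{T}z)(t)=\int_{t_0}^{\infty}g(t,s)\,\mathbb{P}\bigl(t, z(s),z'(s),z''(s)\bigr)\,ds,
\end{eqnarray*}
where one uses $(P_1)$ to guarantee that the roots $\gamma_1,\gamma_2,\gamma_3$ are real and simple so that $g$ is well defined, and one uses the specific sign configuration of the roots (the cases $\mathbb{R}^3_{---},\dots,\mathbb{R}^3_{+++}$) to ensure $g$, $\partial_t g$, $\partial_t^2 g$ decay appropriately so that $\mathcal{T}z$ and its first two derivatives tend to $0$ at infinity. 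Differentiating under the integral sign (justified by the exponential bounds on $g$) gives $(\mathcal{T}z)^{(\ell)}(t)=\int_{t_0}^{\infty}\partial_t^{\ell}g(t,s)\,\mathbb{P}(s,\mathbf{z}(s))\,ds$ for $\ell=0,1,2$, with the convention $\mathbf{z}=(z,z',z'')$.

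Next I would show $\mathcal{T}$ maps a suitable closed ball $\overline{B}_\rho=\{z\in C_0^2:\|z\|\le\rho\}$ into itself, where $\|z\|=\sup_{t\ge t_0}(|z(t)|+|z'(t)|+|z''(t)|)$. Writing $\mathbb{P}(s,\mathbf{x})=\sum_{|\boldsymbol{\alpha}|=0}\Omega_{\boldsymbol{\alpha}}(s)+\sum_{k=1}^4\sum_{|\boldsymbol{\alpha}|=k}\Omega_{\boldsymbol{\alpha}}(s)\mathbf{x}^{\boldsymbol{\alpha}}$ and using $|\mathbf{x}^{\boldsymbol{\alpha}}|\le\|z\|^{k}$ for $|\boldsymbol{\alpha}|=k$, I get
\begin{eqnarray*}
\|\mathcal{T}z\|\le \mathcal{G}\Bigl(\sum_{|\boldsymbol{\alpha}|=0}\Omega_{\boldsymbol{\alpha}}\Bigr)(t)+\sum_{k=1}^4\|z\|^{k}\,\mathcal{L}\Bigl(\sum_{|\boldsymbol{\alpha}|=k}|\Omega_{\boldsymbol{\alpha}}|\Bigr)(t).
\end{eqnarray*}
Hypothesis $(P_2)$ forces the first term to $0$, the $k=1$ coefficient function $\mathcal{L}(\sum_{|\boldsymbol{\alpha}|=1}|\Omega_{\boldsymbol{\alpha}}|)$ to $0$, and the remaining sums to be bounded; hence for $t$ large the right side is dominated by $o(1)+o(1)\rho+C(\rho^2+\rho^3+\rho^4)$, which is $\le\rho$ provided $\rho$ is chosen small enough (this is where the smallness constraint $\rho<1/(\upvarsigma\hat A)$ enters, $\hat A$ being a bound built from the constants $\gamma_i$ in $g$). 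One must also check the image genuinely lies in $C_0^2$, i.e. the three components vanish at infinity, which again follows from the $\mathcal{G}$-convergence and $\mathcal{L}$-convergence statements in $(P_2)$ together with the boundedness of the higher-order $\mathcal{L}$ terms and $\|z\|\le\rho$.

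For the contraction estimate I would use the polynomial structure: for $z,w\in\overline{B}_\rho$ and $|\boldsymbol{\alpha}|=k$, the elementary identity $\mathbf{z}^{\boldsymbol{\alpha}}-\mathbf{w}^{\boldsymbol{\alpha}}$ factors through $\mathbf{z}-\mathbf{w}$ with a bound $k\rho^{k-1}\|z-w\|$, so that
\begin{eqnarray*}
\|\mathcal{T}z-\mathcal{T}w\|\le\|z-w\|\sum_{k=1}^4 k\rho^{k-1}\,\mathcal{L}\Bigl(\sum_{|\boldsymbol{\alpha}|=k}|\Omega_{\boldsymbol{\alpha}}|\Bigr)(t),
\end{eqnarray*}
and by $(P_2)$ the sum is $o(1)+$ (bounded $\times\,\rho$), hence strictly less than $1$ for $t$ beyond some $t_1$ and $\rho$ small — giving a contraction on $C_0^2([t_1,\infty[)$, and a standard continuation/uniqueness argument on $[t_0,t_1]$ transfers the conclusion to $[t_0,\infty[$. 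The main obstacle, and the place requiring the most care, is precisely the interplay in $(P_2)$: the zeroth-order term and the linear term must be shown to produce genuinely vanishing (not merely bounded) contributions so that the ball $\overline{B}_\rho$ is invariant and the Lipschitz constant can be pushed below $1$, while the quadratic-through-quartic terms are only controlled in the bounded sense and must be absorbed by the smallness of $\rho$; getting the bookkeeping of $\mathcal{G}$ versus $\mathcal{L}$ and of the four root-sign cases of $g$ exactly right is the crux. I expect Theorem~\ref{thm:general_no_lin} then follows by the Banach fixed point theorem.
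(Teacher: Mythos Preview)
Your proposal is correct and follows essentially the same route as the paper: recast \eqref{eq:general_no_lin} as the integral equation $z=\mathcal{T}z$ via the Green function \eqref{eq:green_function}, then verify that $\mathcal{T}$ maps a small closed ball of $C_0^2([t_0,\infty[)$ into itself and is a contraction there, invoking $(P_2)$ exactly as you describe to separate the $|\boldsymbol{\alpha}|=0$, $|\boldsymbol{\alpha}|=1$, and $|\boldsymbol{\alpha}|\ge 2$ contributions. Your Lipschitz bookkeeping with the factor $k\rho^{k-1}$ is in fact tidier than the paper's (which writes $\|z_1-z_2\|_0^k$ and then bounds by $\|z_1-z_2\|_0\max\{1,\upeta,\upeta^2,\upeta^3\}$), and the continuation step from $[t_1,\infty[$ back to $[t_0,\infty[$ that you flag is glossed over in the original; the reference to the constant $1/(\upvarsigma\hat A)$ belongs to Theorem~\ref{teo:solution_asymptotic} rather than here, but this does not affect the argument.
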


\begin{proof}
By the method of variation of parameters,  the hypothesis $(P_1)$, 
implies that the equation
\eqref{eq:general_no_lin:1} is equivalent to the following integral equation
\begin{eqnarray}
z(t)=\int_{t_0}^\infty g(t,s)\mathbb{P}\Big(s,z(s),z'(s),z''(s)\Big)ds,
\label{eq:integ_equation}
\end{eqnarray}
where $g$ is the Green function defined on  \eqref{eq:green_function}.
Moreover,
we recall that $C_0^2([t_0,\infty[)$ is a Banach space with the norm 
$\|z\|_0=\sup_{t\geq t_0}[|z(t)|+|z'(t)|+|z''(t)|].$
Now, we define the operator $T$ from $C_0^2([t_0,\infty[)$ to 
$ C_0^2([t_0,\infty[)$ as follows
\begin{eqnarray}
Tz(t)&=&\int_{t_0}^\infty g(t,s)\mathbb{P}\Big(s,z(s),z'(s),z''(s)\Big)ds.
\label{eq:operator_fix_point}
\end{eqnarray}
Then, we note that \eqref{eq:integ_equation} can be rewritten as the operator
equation 
\begin{eqnarray}
Tz=z
\qquad
\mbox{over}
\qquad
D_{\upeta}:=\Big\{z\in C_0^2([t_0,\infty[)\quad:\quad \|z\|_0\le\upeta \Big\},
\label{eq:operator_equation}
\end{eqnarray}
where $\upeta\in\mathbb{R}^+$ will be selected
in order to apply the Banach fixed point theorem.
Indeed, we have that

\vskip 0.5cm
\noindent
{\bf (a)} {\it $T$ is well defined from $C_0^2([t_0,\infty[)$ to $C_0^2([t_0,\infty[)$}.
Let us consider an arbitrary $z\in C_0^2([t_0,\infty[).$ We note that
 \begin{eqnarray*}
  T'z(t) &=& 
  \int_{t_0}^{\infty}
	\frac{\partial g}{\partial t}(t,s)
	\mathbb{P}\Big(s,z(s),z'(s),z''(s)\Big)
	ds,
	\\
  T''z(t) &=& 
  \int_{t_0}^{\infty}
	\frac{\partial^2 g}{\partial t^2}(t,s)
	\mathbb{P}\Big(s,z(s),z'(s),z''(s)\Big)
	ds.
\end{eqnarray*}
Then, by the definition of $g$, we immediately deduce that 
$Tz,T'z,T''z\in C^2([t_0,\infty[,\mathbb{R}).$
Furthermore,  by \eqref{eq:general_no_lin:2}, we can deduce the following estimates
\begin{eqnarray}
|z(t)|  &\le & \left|\int_{t_0}^{\infty}
	g(t,s)
	\sum_{|\boldsymbol{\alpha}|=0}
	\Omega_{\boldsymbol{\alpha}}(s)ds \right|
	+\int_{t_0}^{\infty}
	|g(t,s)|
	\sum_{|\boldsymbol{\alpha}|=1}
	\left|\Omega_{\boldsymbol{\alpha}}(s)\right|
	|z(s)|^{\alpha_1}|z'(s)|^{\alpha_2}|z''(s)|^{\alpha_3}
	ds
	\nonumber\\
	&&
	\qquad
	+\int_{t_0}^{\infty}
	|g(t,s)|
	\sum_{|\boldsymbol{\alpha}|=2}^4
	\left|\Omega_{\boldsymbol{\alpha}}(s)\right|
	|z(s)|^{\alpha_1}|z'(s)|^{\alpha_2}|z''(s)|^{\alpha_3}
	ds,
	\label{eq:bound_for_tz0_0} 
\\
  |z'(t)| &\le & \left|\int_{t_0}^{\infty}
	\frac{\partial g}{\partial t}(t,s)
	\sum_{|\boldsymbol{\alpha}|=0}
	\Omega_{\boldsymbol{\alpha}}(s)ds \right|
	+\int_{t_0}^{\infty}
	\left|\frac{\partial g}{\partial t}(t,s)\right|
	\sum_{|\boldsymbol{\alpha}|=1}
	\left|\Omega_{\boldsymbol{\alpha}}(s)\right|
	|z(s)|^{\alpha_1}|z'(s)|^{\alpha_2}|z''(s)|^{\alpha_3}
	ds
	\nonumber\\
	&&
	\qquad
	+\int_{t_0}^{\infty}
	\left|\frac{\partial g}{\partial t}(t,s)\right|
	\sum_{|\boldsymbol{\alpha}|=2}^4
	\left|\Omega_{\boldsymbol{\alpha}}(s)\right|
	|z(s)|^{\alpha_1}|z'(s)|^{\alpha_2}|z''(s)|^{\alpha_3}
	ds,
	\label{eq:bound_for_tz0}
\\
 |z''(t)| &\le & \left|\int_{t_0}^{\infty}
	\frac{\partial^2 g}{\partial t^2}(t,s)
	\sum_{|\boldsymbol{\alpha}|=0}
	\Omega_{\boldsymbol{\alpha}}(s)ds \right|
	+\int_{t_0}^{\infty}
	\left|\frac{\partial^2 g}{\partial t^2}(t,s)\right|
	\sum_{|\boldsymbol{\alpha}|=1}
	\left|\Omega_{\boldsymbol{\alpha}}(s)\right|
	|z(s)|^{\alpha_1}|z'(s)|^{\alpha_2}|z''(s)|^{\alpha_3}
	ds
	\nonumber\\
	&&
	\qquad
	+\int_{t_0}^{\infty}
	\left|\frac{\partial^2 g}{\partial t^2}(t,s)\right|
	\sum_{|\boldsymbol{\alpha}|=2}^4
	\left|\Omega_{\boldsymbol{\alpha}}(s)\right|
	|z(s)|^{\alpha_1}|z'(s)|^{\alpha_2}|z''(s)|^{\alpha_3}
	ds.
	\label{eq:bound_for_tz0_3}
\end{eqnarray}
Now, by application of the hypothesis $(P_2)$, we have that
the right hand sides of 
\eqref{eq:bound_for_tz0_0}-\eqref{eq:bound_for_tz0_3} tend to $0$
when $t\to\infty$. Then,
$Tz,T'z,T''z\to 0$ when $t\to\infty$
or equivalently $Tz\in C^2_0$ for all $z\in C^2_0.$

\vskip 0.5cm
\noindent
{\bf (b)} {\it For all $\upeta\in ]0,1[$, the set $D_{\upeta}$ 
is invariant under $T$}. Let us consider $z\in D_{\upeta}$.
From \eqref{eq:bound_for_tz0_0}-\eqref{eq:bound_for_tz0_3}, 
we get the following estimate
\begin{eqnarray}
\|Tz\|_0
&\le& \mathcal{G}\left(\sum_{|\boldsymbol{\alpha}|=0}
	\Omega_{\boldsymbol{\alpha}}\right)(t)
	+\sum_{k=1}^4\|z\|^k_0 
	\mathcal{L}\left(\sum_{|\boldsymbol{\alpha}|=k}
	|\Omega_{\boldsymbol{\alpha}}|\right)(t).
\label{eq:bound_for_tz0_inv}
\end{eqnarray}
Now, by $(P_2)$ we deduce that 
the first term on the right hand side of \eqref{eq:bound_for_tz0_inv} 
converges to $0$ when $t\to\infty$. Similarly, by application
of $(P_2)$, we can prove that the inequality
\begin{eqnarray*}
&& \sum_{k=1}^4\|z\|^k_0 
	\mathcal{L}\left(\sum_{|\boldsymbol{\alpha}|=k}
	|\Omega_{\boldsymbol{\alpha}}|\right)(t)
\\
&&
\qquad\qquad
\le
\upeta^2
\Bigg\{
\mathcal{L}\left(\sum_{|\boldsymbol{\alpha}|=2}
	|\Omega_{\boldsymbol{\alpha}}|\right)(t)
+
\upeta
\mathcal{L}\left(\sum_{|\boldsymbol{\alpha}|=3}
	|\Omega_{\boldsymbol{\alpha}}|\right)(t)
+
\upeta^2
\mathcal{L}\left(\sum_{|\boldsymbol{\alpha}|=4}
	|\Omega_{\boldsymbol{\alpha}}|\right)(t)
\Bigg\}
\\
&&
\qquad\qquad
\le
\upeta
\end{eqnarray*}
holds
when $t\to\infty$ in a right neighborhood of $\upeta=0$. 
Hence, by \eqref{eq:bound_for_tz0_inv} 
and $(P_2)$,
we prove that $Tz\in D_{\upeta}$ for all $z\in D_{\upeta}$.

\vskip 0.5cm
\noindent
{\bf (c)} {\it $T$ is a contraction for $\upeta\in ]0,1[$}. 
Let $z_1,z_2\in D_{\upeta}$, by by \eqref{eq:general_no_lin:2}
and algebraic rearrangements,
we follow that
\begin{eqnarray*}
\|T z_1-T z_2\|_0
&\leq& 
\sum_{k=1}^4
\|z_1-z_2\|^k_0
\mathcal{L}\left(\sum_{|\boldsymbol{\alpha}|=k}
|\Omega_{\boldsymbol{\alpha}}|\right)(t)
\\
&\le&
\|z_1-z_2\|_0
\max\Big\{1,\upeta,\upeta^2,\upeta^3\Big\}
\sum_{k=1}^4
\mathcal{L}\left(\sum_{|\boldsymbol{\alpha}|=k}
|\Omega_{\boldsymbol{\alpha}}|\right)(t).
\end{eqnarray*}
Then, by application of $(P_2)$, we deduce that $T$ is a contraction,
since, for an arbitrary $\upeta\in ]0,1[$, 
we have that $\max\Big\{ 1,\upeta,\upeta^2,\upeta^3\Big\}=\upeta<1$.

Hence, from (a)-(c) and application of Banach fixed point theorem,
we deduce that there is a unique $z\in D_\upeta\subset
C_0^2([t_0,\infty[)$ solution of \eqref{eq:operator_equation}. 
\end{proof}

\subsection{Asymptotic behavior of the solution for \eqref{eq:general_no_lin}}

Before to presente the result of this subsection, we deduce a 
useful bound of the Green function~\eqref{eq:green_function}. 
To fix indeas, we consider $g_1$ and we have that
\begin{eqnarray}
&&
\left|g_1(t,s)\right|
+
\left|\frac{\partial g_1}{\partial t}(t,s)\right|
+
\left|\frac{\partial^2 g_1}{\partial t^2}(t,s)\right|
\nonumber\\
&&
\hspace{1.5cm}
\leq
 \Big(|\gamma_3-\gamma_2|
	+|\gamma_1-\gamma_3|
	+|\gamma_2-\gamma_1|
	\Big)
e^{-\max\Big\{\gamma_1,\gamma_2,\gamma_3\Big\}(t-s)},
\quad{}
\nonumber\\
&&\hspace{1.7cm}
+
 \Big(|\gamma_3-\gamma_2||\gamma_1|
	+|\gamma_1-\gamma_3||\gamma_2|
	+|\gamma_2-\gamma_1||\gamma_3|	
	\Big)
e^{-\max\Big\{\gamma_1,\gamma_2,\gamma_3\Big\}(t-s)},
\quad{}
\nonumber\\
&&\hspace{1.7cm}
+
 \Big(|\gamma_3-\gamma_2||\gamma_1|^2
	+|\gamma_1-\gamma_3||\gamma_2|^2
	+|\gamma_2-\gamma_1||\gamma_3|^2	
	\Big)
e^{-\max\Big\{\gamma_1,\gamma_2,\gamma_3\Big\}(t-s)}
\nonumber\\
&&\hspace{1.5cm}
\le A e^{-\gamma_1(t-s)},
\label{eq:bound:g1}
\end{eqnarray}
where 
\begin{eqnarray}
 A=|\gamma_3-\gamma_2|(1+|\gamma_1|+|\gamma_1|^2)
 +|\gamma_3-\gamma_1|(1+|\gamma_2|+|\gamma_2|^2)
 +|\gamma_2-\gamma_1|(1+|\gamma_3|+|\gamma_3|^2).
 \label{eq:bound:not:A}
\end{eqnarray}
Analogously, we can prove  the bounds for $i=2,3,$ and in general
we obtain that
\begin{eqnarray}
&&
\left|g_i(t,s)\right|
+
\left|\frac{\partial g_i}{\partial t}(t,s)\right|
+
\left|\frac{\partial^2 g_i}{\partial t^2}(t,s)\right|
\le A e^{-\gamma_i(t-s)}, 
\quad i=1,2,3.
\label{eq:bound:gi}
\end{eqnarray}

\begin{theorem}
\label{teo:solution_asymptotic}
Consider that the hypotheses of Theorem~\ref{thm:general_no_lin}
are satisfied and assume that $\gamma_1>\gamma_2>\gamma_3$. 
Moreover consider the positive number
$\upvarsigma$ depending of $\gamma_1,\gamma_2,\gamma_3$ and a given
number $\beta$ defined as follows
\begin{eqnarray}
\upvarsigma
=
\left\{
\begin{array}{lll}
\displaystyle
\frac{1}{-\gamma_1+\beta},
&\;
& (\gamma_1,\gamma_2,\gamma_3)\in \mathbb{R}^3_{---}\quad\mbox{and}\quad \beta\in ]\gamma_1,0[,
\\
\displaystyle
\frac{1}{-(-\gamma_1+\beta)}+\frac{1}{-\gamma_2+\beta},
&
& (\gamma_1,\gamma_2,\gamma_3)\in \mathbb{R}^3_{+--}\quad\mbox{and}\quad \beta\in ]\gamma_2,0[,
\\
\displaystyle
\frac{1}{-(-\gamma_2+\beta)}+\frac{1}{-\gamma_3+\beta},
&
& (\gamma_1,\gamma_2,\gamma_3)\in \mathbb{R}^3_{++-}\quad\mbox{and}\quad \beta\in ]\gamma_3,0[,
\\
\displaystyle
\frac{1}{-(-\gamma_3+\beta)},
&
& (\gamma_1,\gamma_2,\gamma_3)\in \mathbb{R}^3_{+++}\quad\mbox{and}\quad \beta\in ]0,\gamma_3[.
\end{array}
\right.
\label{eq:asymptotic_teo:sigma}
\end{eqnarray}
If the coefficients of $\mathbb{P}$ satisfies the following estimate
\begin{eqnarray}
\sum_{|\boldsymbol{\alpha}|=1}^4\Big|\Omega_{\boldsymbol{\alpha}}(s)\Big|\le 
\rho
\quad
\mbox{ for }\rho\in \left]0,\frac{1}{\upvarsigma\hat{A}}\right[
\quad
\mbox{ with}
\quad
\hat{A}=\frac{A}{|(\gamma_2-\gamma_1)(\gamma_3-\gamma_2)(\gamma_3-\gamma_1)|},
\label{eq:hipotesis3}
\end{eqnarray}
where  $A$ defined on \eqref{eq:bound:not:A},
then the solution  of \eqref{eq:general_no_lin}
has the following asymptotic behavior 
\begin{eqnarray}
z(t),z'(t),z''(t)
=
\left\{
\begin{array}{ll}
\displaystyle
O\Big(\int_{t}^{\infty}e^{-\beta(t-s)}
\Big|\sum_{|\boldsymbol{\alpha}|=0}\Omega_{\boldsymbol{\alpha}}(s)\Big|
ds\Big),
& (\gamma_1,\gamma_2,\gamma_3)\in \mathbb{R}^3_{---},\;\, \beta\in ]\gamma_1,0[,
\\
\displaystyle
O\Big(\int_{t_0}^{\infty}e^{-\beta(t-s)}
\Big|\sum_{|\boldsymbol{\alpha}|=0}\Omega_{\boldsymbol{\alpha}}(s)\Big|
ds\Big),
& (\gamma_1,\gamma_2,\gamma_3)\in \mathbb{R}^3_{+--},\;\, \beta\in ]\gamma_2,0[,
\\
\displaystyle
O\Big(\int_{t_0}^{\infty}e^{-\beta(t-s)}
\Big|\sum_{|\boldsymbol{\alpha}|=0}\Omega_{\boldsymbol{\alpha}}(s)\Big|
ds\Big),
& (\gamma_1,\gamma_2,\gamma_3)\in \mathbb{R}^3_{++-},\;\, \beta\in ]\gamma_3,0[,
\\
\displaystyle
O\Big(\int_{t_0}^{t}e^{-\beta(t-s)}
\Big|\sum_{|\boldsymbol{\alpha}|=0}\Omega_{\boldsymbol{\alpha}}(s)\Big|
ds\Big),
& (\gamma_1,\gamma_2,\gamma_3)\in \mathbb{R}^3_{+++},\;\, \beta\in ]0,\gamma_3[.
\end{array}
\right.
\;\;\;{}
\label{eq:asymptotic_teo:form}
\end{eqnarray}
\end{theorem}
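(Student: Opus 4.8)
The plan is to bootstrap from the fixed-point representation already established in Theorem~\ref{thm:general_no_lin}. Since the unique solution $z\in C_0^2([t_0,\infty[)$ satisfies the integral equation \eqref{eq:integ_equation}, I would first split the right-hand side into the nonhomogeneous term coming from $|\boldsymbol{\alpha}|=0$ and the nonlinear remainder, writing
\begin{eqnarray*}
z^{(\ell)}(t)=\int_{t_0}^\infty \frac{\partial^\ell g}{\partial t^\ell}(t,s)\sum_{|\boldsymbol{\alpha}|=0}\Omega_{\boldsymbol{\alpha}}(s)\,ds
+\int_{t_0}^\infty \frac{\partial^\ell g}{\partial t^\ell}(t,s)\sum_{|\boldsymbol{\alpha}|=1}^4\Omega_{\boldsymbol{\alpha}}(s)\,z(s)^{\alpha_1}z'(s)^{\alpha_2}z''(s)^{\alpha_3}\,ds,
\end{eqnarray*}
for $\ell=0,1,2$. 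Using the pointwise Green-function bound \eqref{eq:bound:gi} together with the normalization constant in \eqref{eq:green_function}, each of these three quantities is dominated by $\hat A\int e^{-\gamma_i(t-s)}(\cdots)\,ds$ on the appropriate branch, so it suffices to track a single scalar majorant $u(t):=|z(t)|+|z'(t)|+|z''(t)|$.

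Next I would set up a scalar integral inequality for $u$. Let $\Phi(t)$ denote the ``source'' term, i.e. $\hat A$ times the integral of $e^{-\gamma_i(t-s)}|\sum_{|\boldsymbol{\alpha}|=0}\Omega_{\boldsymbol{\alpha}}(s)|$ over the branch-appropriate domain (that is exactly the $O(\cdot)$ expression in \eqref{eq:asymptotic_teo:form}). Then, pulling $\sup u$ out of the nonlinear integrals and using $u\le\upeta<1$ so that $u^{\alpha_1+\alpha_2+\alpha_3}\le u$ for $|\boldsymbol\alpha|\ge1$, hypothesis \eqref{eq:hipotesis3} gives
\begin{eqnarray*}
u(t)\le \Phi(t)+\hat A\int_{\text{branch}}e^{-\gamma_i(t-s)}\Big(\sum_{|\boldsymbol{\alpha}|=1}^4|\Omega_{\boldsymbol{\alpha}}(s)|\Big)u(s)\,ds
\le \Phi(t)+\rho\hat A\int_{\text{branch}}e^{-\gamma_i(t-s)}u(s)\,ds.
\end{eqnarray*}
The kernel $e^{-\gamma_i(t-s)}$ integrated against the exponential weight $e^{\beta s}$ (or $e^{-\beta s}$, depending on branch orientation and the sign pattern of the $\gamma$'s) contributes precisely the factor $\upvarsigma$ from \eqref{eq:asymptotic_teo:sigma}; this is where the four-case definition of $\upvarsigma$ is used, the split $\frac{1}{-(-\gamma_i+\beta)}+\frac1{-\gamma_{i+1}+\beta}$ reflecting the two-piece structure of $g_2,g_3$. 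One then runs a Gronwall-type / weighted-norm contraction argument: define $v(t)=u(t)/\Phi(t)$ (or compare in the weighted sup-norm $\sup_t \Phi(t)^{-1}u(t)$), and the inequality becomes $\|v\|\le 1+\rho\hat A\upvarsigma\|v\|$, so $\|v\|\le(1-\rho\hat A\upvarsigma)^{-1}<\infty$ exactly because $\rho<1/(\upvarsigma\hat A)$. This yields $u(t)=O(\Phi(t))$, which is the claim.

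I expect the main obstacle to be the branch-by-branch verification that the Green-kernel estimate genuinely produces the constant $\upvarsigma$ and that the integration domain in $\Phi$ matches \eqref{eq:asymptotic_teo:form}: in the $\mathbb{R}^3_{---}$ case the kernel is supported on $s\ge t$ (giving $\int_t^\infty$), in the $\mathbb{R}^3_{+++}$ case on $s\le t$ (giving $\int_{t_0}^t$), and in the two mixed cases $g$ has both a forward and a backward piece, so $\Phi$ naturally carries the full $\int_{t_0}^\infty$ and $\upvarsigma$ is a sum of two reciprocals. One must check in each case that the weight one divides by, when convolved against $e^{-\gamma_i(t-s)}$, stays finite and reproduces the stated majorant up to a constant — a routine but case-heavy computation with the explicit $g_1,\dots,g_4$. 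A secondary technical point is justifying that $\Phi(t)$ is finite and does not vanish (so that division by it is legitimate); this follows from $(P_2)$, since $\mathcal{G}(\sum_{|\boldsymbol\alpha|=0}\Omega_{\boldsymbol\alpha})(t)\to0$ already guarantees the relevant integrals converge, and if $\Phi\equiv0$ on a tail then $z\equiv0$ there and the $O(\cdot)$ statement is trivial. Once these case distinctions are dispatched, the weighted contraction estimate closes the proof uniformly.
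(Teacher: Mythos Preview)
Your plan is correct and reaches the right conclusion, but the route differs from the paper's. The paper does not work with the fixed point $z$ directly; instead it runs the Picard iteration $\omega_{n+1}=T\omega_n$, $\omega_0=0$, and proves by induction on $n$ that
\[
|\omega_n(t)|+|\omega_n'(t)|+|\omega_n''(t)|\le \Phi_n\,\Psi(t),
\qquad
\Psi(t)=\int_{\text{branch}} e^{-\beta(t-\tau)}\Big|\sum_{|\boldsymbol{\alpha}|=0}\Omega_{\boldsymbol{\alpha}}(\tau)\Big|\,d\tau,
\]
with the recursion $\Phi_n=\hat A(1+\rho\,\upvarsigma\,\Phi_{n-1})$. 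This gives a finite geometric series with ratio $\rho\hat A\upvarsigma<1$, hence $\Phi_n\to \hat A/(1-\rho\hat A\upvarsigma)$, and one passes to the limit in $C_0^2$. The case-by-case computation of $\upvarsigma$ that you anticipate is exactly what occupies the proof, and your reading of the two-piece structure of $g_2,g_3$ as producing the sum of reciprocals is the same as theirs.

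Your direct weighted-norm argument (set $M=\sup_t u(t)/\Psi(t)$ and close $M\le \hat A+\rho\hat A\upvarsigma M$) is cleaner and lands on the identical constant. The one point to tighten is the a~priori finiteness of $M$: your remark that ``if $\Phi\equiv0$ on a tail then $z\equiv0$ there'' does not exclude $\Psi(t)\to0$ faster than $u(t)$ without vanishing identically, so the inequality $M\le \hat A+\rho\hat A\upvarsigma M$ cannot be rearranged as stated. The paper's iterative scheme sidesteps this since each $\Phi_n$ is finite by construction; in your framework the standard fix is to take the weighted supremum over $[t_0,T]$ (where $\Psi$ is continuous and strictly positive, so $M_T<\infty$), obtain the $T$-independent bound, and let $T\to\infty$. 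Once you insert that step, your proof is complete and somewhat shorter than the paper's.
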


\begin{proof}
We prove the formula \eqref{eq:asymptotic_teo:form} by analyzing 
an iterative sequence and
using the properties of the operator $T$ defined in \eqref{eq:operator_fix_point}.

\vspace{0.5cm}
\noindent
{\it Proof of \eqref{eq:asymptotic_teo:form}
for  $(\gamma_1,\gamma_2,\gamma_3)\in \mathbb{R}^3_{---}$.}
Let us denote by  $T$ the operator defined in \eqref{eq:operator_fix_point}.
Now,  on $D_\upeta$ with $\upeta\in ]0,1[,$ we 
define the sequence $\omega_{n+1}=T\omega_{n}$ with $\omega_0=0$, we have
that $\omega_n\to z$ when $n\to\infty$. This fact is a consequence 
of the contraction property of $T$. 

We note that the Green
function $g$ defined on \eqref{eq:green_function} is given in terms
of $g_1$, since 
$(\gamma_1,\gamma_2,\gamma_3)\in \mathbb{R}^3_{---}$.
Then, we have that  the operator $T$ can be rewritten equivalently
as follows
\begin{eqnarray}
Tz(t)=\frac{1}{(\gamma_2-\gamma_1)(\gamma_3-\gamma_2)(\gamma_3-\gamma_1)}
\int_{t}^{\infty} g_1(t,s)
\mathbb{P}\Big(s,z(s),z'(s),z''(s)\Big)ds,
\quad\mbox{for $t\ge t_0$,}
\label{eq:operator_fix_point_g1}
\end{eqnarray}
since $g_1(t,s)=0$ for $s\in [t_0,t]$.
Thus,
the proof of \eqref{eq:asymptotic_teo:form} 
is reduced to prove that
\begin{align}
&\exists \; \Phi_n\in \mathbb{R}_+ \; :\; 
|\omega_n(t)|+|\omega'_n(t)|+|\omega''_n(t)|
\leq
\Phi_n\int_{t}^{\infty}e^{-\beta(t-\tau)}
\Big|\sum_{|\boldsymbol{\alpha}|=0}\Omega_{\boldsymbol{\alpha}}(\tau)\Big|
d\tau,
\;\text{ $\forall$ }t\geq t_0,
\quad\mbox{${}$}
\label{eq17}
\\
& \exists\; \Phi\in\mathbb{R}_+\;\;\; :\;  \Phi_n\to \Phi,
\mbox{ when $n\to \infty$}.
\label{eq17:uniformly_bounded}
\end{align}
Hence,
to complete the proof of \eqref{eq:asymptotic_teo:form} with $i=1$, 
we proceed to prove \eqref{eq17} by mathematical induction on $n$
and deduce that \eqref{eq17:uniformly_bounded} is a consequence of the construction
of the sequence $\{\Phi_n\}$.

We now prove \eqref{eq17}. Note that
for $n=1$ the estimate \eqref{eq17} is satisfied 
with $\Phi_1=\hat{A}$. Indeed,
it can be proved immediately by the definition of the operator $T$
given on \eqref{eq:operator_fix_point_g1}, the property  
$\mathbb{P}(s,0,0,0)=0$,
the estimate \eqref{eq:bound:g1} and the hypothesis
that $\beta\in [\gamma_1,0[$, since
\begin{eqnarray*}
|\omega_{1}(t)|+|\omega'_{1}(t)|+ |\omega''_{1}(t)|
  &=& 
|T\omega_0(t)|+|T'\omega_0(t)|+|T''\omega_0(t)|
  \\
&=&\frac{1}{|(\gamma_2-\gamma_1)(\gamma_3-\gamma_2)(\gamma_3-\gamma_1)|}
\\
&&\hspace{0.5cm}
	\times
	\int_{t}^{\infty}
	\left(
	\left| g_1(t,s)\right|+
	\left|\frac{\partial g_1}{\partial t}(t,s)\right|+
	\left|\frac{\partial^2 g_1}{\partial t^2}(t,s)\right|
	\right)
	\Big|\sum_{|\boldsymbol{\alpha}|=0}\Omega_{\boldsymbol{\alpha}}(s)\Big|ds
 \\
&\le &
	\frac{A}{|(\gamma_2-\gamma_1)(\gamma_3-\gamma_2)(\gamma_3-\gamma_1)|}
	\int_{t}^{\infty}e^{-\gamma_1(t-s)}
	\Big|\sum_{|\boldsymbol{\alpha}|=0}\Omega_{\boldsymbol{\alpha}}(s)\Big|
	ds
 \\
&\le &	
	\hat{A}
	\int_{t}^{\infty}e^{-\beta(t-\tau)}
	\Big|\sum_{|\boldsymbol{\alpha}|=0}\Omega_{\boldsymbol{\alpha}}(\tau)\Big|
	d\tau.
\end{eqnarray*}
Now, assuming that \eqref{eq17} is valid for  
$n=k$, we prove that \eqref{eq17} is also valid for $n=k+1$.
However, before to prove the estimate \eqref{eq17} for $n=k+1$, we note that 
by \eqref{eq:general_no_lin:2} and the fact
that $\max\{1,\upeta,\upeta^2,\upeta^3\}=1$, we deduce the following estimate
\begin{eqnarray}
&&\Big|\mathbb{P}\Big(s,\omega_k(s),\omega'_k(s),\omega''_k(s)\Big)\Big|
\le
\sum_{|\boldsymbol{\alpha}|=0}^{4}
\Big|\Omega_{\boldsymbol{\alpha}}(s)\Big|
\Big|\omega_k(s)\Big|^{\alpha_1}
\Big|\omega'_k(s)\Big|^{\alpha_2}
\Big|\omega''_k(s)\Big|^{\alpha_3}
\nonumber\\
&&
\hspace{1.5cm}
\le 
\sum_{|\boldsymbol{\alpha}|=0}
\Big|\Omega_{\boldsymbol{\alpha}}(s)\Big|
+
\Big(|\omega_k(s)|+|\omega'_k(s)|+|\omega''_k(s)|\Big)
\left(\sum_{|\boldsymbol{\alpha}|=1}^{4}
\Big|\Omega_{\boldsymbol{\alpha}}(s)\Big|
\upeta^{|\boldsymbol{\alpha}|-1}
\right)
\nonumber
\\
&&
\hspace{1.5cm}
\le 
\sum_{|\boldsymbol{\alpha}|=0}
\Big|\Omega_{\boldsymbol{\alpha}}(s)\Big|
+
\Big(|\omega_k(s)|+|\omega'_k(s)|+|\omega''_k(s)|\Big)
\sum_{|\boldsymbol{\alpha}|=1}^{4}
\Big|\Omega_{\boldsymbol{\alpha}}(s)\Big|.
\label{eq:efe_estimate}
\end{eqnarray}
Using \eqref{eq:operator_fix_point_g1},
 the inductive hypothesis,
the inequality \eqref{eq:bound:g1} and the estimate \eqref{eq:efe_estimate}
we have that
\begin{eqnarray*}
&&|\omega_{k+1}(t)|+|\omega'_{k+1}(t)|+|\omega''_{k+1}(t)|
\\
&& 
\hspace{0.5cm}
= |T\omega_k(t)|+|T'\omega_k(t)|+|T''\omega_k(t)|
 \\
 && 
\hspace{0.5cm} 
 =
 \frac{1}{|(\gamma_2-\gamma_1)(\gamma_3-\gamma_2)(\gamma_3-\gamma_1)|}\Bigg\{\left
 |\int_{t}^{\infty}
	g_1(t,s)
	\mathbb{P}\Big(s,\omega_k(s),\omega'_k(s),\omega''_k(s)\Big)
	ds\right|
\\
&& 
\hspace{1.7cm}
	 +
 \left|\int_{t}^{\infty}
	\frac{\partial g_1}{\partial t}(t,s)
	\mathbb{P}\Big(s,\omega_k(s),\omega'_k(s),\omega''_k(s)\Big)
	ds\right|
\\
&& 
\hspace{1.7cm}
 +\left|\int_{t}^{\infty}
	\frac{\partial^2 g_1}{\partial t^2}(t,s)
	\mathbb{P}\Big(s,\omega_k(s),\omega'_k(s),\omega''_k(s)\Big)
	ds\right|
	 \Bigg\}
\\
&&\hspace{0.5cm} 
 =\frac{1}{|(\gamma_2-\gamma_1)(\gamma_3-\gamma_2)(\gamma_3-\gamma_1)|}
\\
&&\hspace{1cm} 
\times
 \int_{t}^{\infty}
	\left(
	\left| g_1(t,s)\right|
	+\left|\frac{\partial g_1}{\partial t}(t,s) \right|
	+\left|\frac{\partial^2 g_1}{\partial t^2}(t,s) \right|
	\right)
	  \Big|
	\mathbb{P}\Big(s,\omega_k(s),\omega'_k(s),\omega''_k(s)\Big)
	\Big|ds
\\
&&\hspace{0.5cm}
\le
\hat{A}\int_{t}^{\infty}e^{-\gamma_1(t-s)}
\left\{
\sum_{|\boldsymbol{\alpha}|=0}
\Big|\Omega_{\boldsymbol{\alpha}}(s)\Big|
+
\Big(|\omega_k(s)|+|\omega'_k(s)|+|\omega''_k(s)|\Big)
\sum_{|\boldsymbol{\alpha}|=1}^{4}
\Big|\Omega_{\boldsymbol{\alpha}}(s)\Big|
\right\}
ds
\\
&&\hspace{0.5cm}
\le
\hat{A}\int_{t}^{\infty}e^{-\gamma_1(t-s)}
\left\{
\sum_{|\boldsymbol{\alpha}|=0}
\Big|\Omega_{\boldsymbol{\alpha}}(s)\Big|
+
\sum_{|\boldsymbol{\alpha}|=1}^{4}
\Big|\Omega_{\boldsymbol{\alpha}}(s)\Big|
\Phi_k\int_{s}^{\infty}e^{-\gamma_1(s-\tau)}
\sum_{|\boldsymbol{\alpha}|=0}
\Big|\Omega_{\boldsymbol{\alpha}}(\tau)\Big|
d\tau 
\right\}
ds
\\
&&\hspace{0.5cm}
\le
\hat{A}
\left\{ 1+
\int_{t}^{\infty}e^{-\gamma_1(t-s)}
\sum_{|\boldsymbol{\alpha}|=1}^{4}
\Big|\Omega_{\boldsymbol{\alpha}}(s)\Big|
\Phi_kds
\right\}
\int_{t}^{\infty}e^{-\gamma_1(t-\tau)}
\sum_{|\boldsymbol{\alpha}|=0}
\Big|\Omega_{\boldsymbol{\alpha}}(\tau)\Big|
d\tau 
\\
&&\hspace{0.5cm}
=
\hat{A}
\left( 
1+\frac{\Phi_k\rho}{-\gamma_1+\beta}
\right)
\int_{t}^{\infty}e^{-\beta(t-\tau)}
\sum_{|\boldsymbol{\alpha}|=0}
\Big|\Omega_{\boldsymbol{\alpha}}(\tau)\Big|
d\tau.
\end{eqnarray*}
Then, by the induction process, \eqref{eq17} is satisfied with 
$\Phi_{n}=   \hat{A}(1+\Phi_{n-1}\;\rho\;(-\gamma_1+\beta)^{-1}).$
By \eqref{eq:asymptotic_teo:sigma} we can rewrite $\Phi_{n}$
as follows $\Phi_{n}=   \hat{A}(1+\Phi_{n-1}\;\rho\;\upvarsigma)$

The proof of \eqref{eq17:uniformly_bounded} is given as follows. 
Using recursively the definition of $\Phi_{n-2},\ldots,\Phi_{2}$,
we can rewrite $\Phi_{n}$ as the sum of the terms of 
a geometric progression where the common ratio is given
by $\rho \hat{A}\upvarsigma$. Then, 
the hypothesis \eqref{eq:hipotesis3} implies the existence
of $\Phi$ satisfying \eqref{eq17:uniformly_bounded},
since by the construction of 
$\rho$ we have that
$\rho\hat{A}\upvarsigma\in ]0,1[.$
More precisely, we deduce that         
\begin{eqnarray*}
\lim_{n\to\infty}
\Phi_{n}=\hat{A}\lim_{n\to\infty}\sum_{i=0}^{n-1}
	\Big(\rho\hat{A}\upvarsigma\Big)^i
        =\hat{A}
        \lim_{n\to\infty}
        \frac{\Big[(\rho\hat{A}\upvarsigma)^n-1\Big]}
        {\rho\hat{A}|\gamma_1|^{-1}-1}
        =
        \frac{\hat{A}}{1-\rho\hat{A}\upvarsigma}
        =\Phi>0.
\end{eqnarray*}

Hence, \eqref{eq17}-\eqref{eq17:uniformly_bounded} are  valid and
the proof of  \eqref{eq:asymptotic_teo:form}
for  $(\gamma_1,\gamma_2,\gamma_3)\in \mathbb{R}^3_{---}$
is concluded by passing to the limit the sequence $\{\Phi_{n}\}$ 
when $n\to\infty$ 
in the topology of $C^2_0([t_0,\infty]).$

\vspace{0.5cm}
\noindent
{\it Proof of \eqref{eq:asymptotic_teo:form}
for  $(\gamma_1,\gamma_2,\gamma_3)\in \mathbb{R}^3_{+--}$.}
Similarly to the case $(\gamma_1,\gamma_2,\gamma_3)\in \mathbb{R}^3_{---}$ we 
define the sequence $\omega_{n+1}=T\omega_{n}$ with $\omega_0=0$ and,
by the contraction property of $T$, we can deduce that
$\omega_n\to z$ when $n\to\infty$. 
Then, the Green function $g$ defined on \eqref{eq:green_function} is given in
terms of $g_2$.
Thereby,  the operator $T$ can be rewritten equivalently
as follows
\begin{eqnarray}
Tz(t)&=&\frac{1}{(\gamma_2-\gamma_1)(\gamma_3-\gamma_2)(\gamma_3-\gamma_1)}
\int_{t_0}^{\infty} g_2(t,s)
\mathbb{P}\Big(s,\omega_k(s),\omega'_k(s),\omega''_k(s)\Big)ds
\nonumber\\
&=&
\frac{1}{(\gamma_2-\gamma_1)(\gamma_3-\gamma_2)(\gamma_3-\gamma_1)} 
\Bigg\{
\int_{t_0}^{t} 
	(\gamma_2-\gamma_3)e^{-\gamma_1(t-s)}
	\mathbb{P}\Big(s,\omega_k(s),\omega'_k(s),\omega''_k(s)\Big)ds
\nonumber\\
&&
+\int_{t}^{\infty} 
	\Big[(\gamma_1-\gamma_2)e^{-\gamma_3(t-s)}
	+(\gamma_3-\gamma_1)e^{-\gamma_2(t-s)}\Big]
\mathbb{P}\Big(s,\omega_k(s),\omega'_k(s),\omega''_k(s)\Big)ds
\Bigg\}.
\nonumber\\
&&
\label{eq:operator_fix_point_g2}
\end{eqnarray}
Then, the proof of \eqref{eq:asymptotic_teo:form} 
for  $(\gamma_1,\gamma_2,\gamma_3)\in \mathbb{R}^3_{+--}$
is reduced to prove 
\begin{align}
&\exists\quad \Phi_n\in \mathbb{R}_+ \; :\; 
|\omega_n(t)|+|\omega'_n(t)|+|\omega''_n(t)|\leq
\Phi_n\int_{t_0}^{\infty}e^{-\beta(t-\tau)}
\Big|\sum_{|\boldsymbol{\alpha}|=0}\Omega_{\boldsymbol{\alpha}}(\tau)\Big|
d\tau,\;\text{ $\forall$ }t\geq t_0,
\label{eq17_2}
\\
&\exists\quad\Phi\in\mathbb{R}_+\;\;\; :\; \Phi_n\to \Phi
\mbox{ when $n\to \infty$}.
\label{eq17:uniformly_bounded_2}
\end{align}
In the induction step for $n=1$ the estimate \eqref{eq17_2} is satisfied 
with $\Phi_1=\hat{A}$, 
since by the definition of the operator $T$
given on \eqref{eq:operator_fix_point_g2}, the property  
$\mathbb{P}(s,0,0,0)=0$,
the estimate \eqref{eq:bound:gi}
and the fact
that $\beta\in[\gamma_2,0[\subset[\gamma_3,\gamma_1]$, 
we deduce the following bound
\begin{eqnarray*}
 && |\omega_{1}(t)|+ |\omega'_{1}(t)|+|\omega''_{1}(t)| 
  =|T\omega_0(t)|+|T'\omega_0(t)|+|T''\omega_0(t)|
\\
&&\hspace{1.5cm}
\le
\frac{1}{|(\gamma_2-\gamma_1)(\gamma_3-\gamma_2)(\gamma_3-\gamma_1)|}
\\
&&\hspace{2cm}
\times
\Bigg\{
|\gamma_2-\gamma_3|\Big(1+|\gamma_1|+|\gamma_1|^2\Big)
\int_{t_0}^{t}e^{-\gamma_1(t-s)}
\Big|\sum_{|\boldsymbol{\alpha}|=0}\Omega_{\boldsymbol{\alpha}}(\tau)\Big|
ds
\\
&&\hspace{2cm}
\Big[
|\gamma_2-\gamma_1|\Big(1+|\gamma_3|+|\gamma_3|^2\Big)
+|\gamma_3-\gamma_1|\Big(1+|\gamma_2|+|\gamma_2|^2\Big)
\Big]
\\
&&\hspace{2cm}
\times\int_{t}^{\infty}e^{-\beta(t-s)}
\Big|\sum_{|\boldsymbol{\alpha}|=0}\Omega_{\boldsymbol{\alpha}}(\tau)\Big|
ds\Bigg\}
\\
&&\hspace{1.5cm}
\le
\hat{A}
\left\{
\int_{t_0}^{t}e^{-\max\{\gamma_2,\gamma_3\}(t-s)}
\Big|\sum_{|\boldsymbol{\alpha}|=0}\Omega_{\boldsymbol{\alpha}}(s)\Big|
ds
+
\int_{t}^{\infty}e^{-\beta(t-s)}
\Big|\sum_{|\boldsymbol{\alpha}|=0}\Omega_{\boldsymbol{\alpha}}(s)\Big|
ds
\right\}
\\
&&\hspace{1.5cm}
=\hat{A}\int_{t_0}^{\infty}e^{-\beta(t-\tau)}
\Big|\sum_{|\boldsymbol{\alpha}|=0}\Omega_{\boldsymbol{\alpha}}(\tau)\Big|
d\tau.
\end{eqnarray*}
Then, the general induction step can be proved as follows
\begin{eqnarray*}
&&  |\omega_{k+1}(t)|+|\omega'_{k+1}(t)|+|\omega''_{k+1}(t)| 
\\
&&\qquad
  = |T\omega_k(t)|+|T'\omega_k(t)|+ |T''\omega_k(t)|
	\\
&&\qquad\le 
	\hat{A}
	\int_{t_0}^{t}e^{-\gamma_1(t-s)}
	\left\{
	\Big|\sum_{|\boldsymbol{\alpha}|=0}\Omega_{\boldsymbol{\alpha}}(s)\Big|
	+
	\Big(|\omega_k(s)|+|\omega'_k(s)|+|\omega''_k(s)|\Big)
	\sum_{|\boldsymbol{\alpha}|=1}^4
	\Big|\Omega_{\boldsymbol{\alpha}}(s)\Big|
	 \right\}
	ds
	\\
&&\qquad
  	+\hat{A}
	\int_{t}^{\infty}e^{-\gamma_2(t-s)}
	\left\{
	\Big|\sum_{|\boldsymbol{\alpha}|=0}\Omega_{\boldsymbol{\alpha}}(s)\Big|
	+
	\Big(|\omega_k(s)|+|\omega'_k(s)|+|\omega''_k(s)|\Big)
	\sum_{|\boldsymbol{\alpha}|=1}^4
	\Big|\Omega_{\boldsymbol{\alpha}}(s)\Big|
	\right\}
	ds
	\\
\\&&\qquad=
	\hat{A} \Big[J_1(t)+J_2(t)\Big]
\\
&&\qquad
\le 
   \hat{A}\Big(
	1
	+\Phi_k\;\rho\;
	\upvarsigma\Big)
	\int_{t_0}^{\infty} e^{-\beta (t-\tau)}
	\Big|\sum_{|\boldsymbol{\alpha}|=0}\Omega_{\boldsymbol{\alpha}}(\tau)\Big|
	d\tau,
\end{eqnarray*} 
where $\upvarsigma$ is the number defined on \eqref{eq:asymptotic_teo:sigma},
since for $\beta\in]\gamma_2,0[\subset]\gamma_3,\gamma_1[$,  we can deduce that 
\begin{eqnarray*}
	J_1(t)&:=&\int_{t_0}^{t}e^{-\gamma_1(t-s)}
	\Big|\sum_{|\boldsymbol{\alpha}|=0}\Omega_{\boldsymbol{\alpha}}(s)\Big|
	ds
	+
	\int_{t}^{\infty}e^{-\gamma_2(t-s)}
	\Big|\sum_{|\boldsymbol{\alpha}|=0}\Omega_{\boldsymbol{\alpha}}(s)\Big|
	ds
	\\
	&\le&
	\int_{t_0}^{t}e^{-\beta(t-s)}
	\Big|\sum_{|\boldsymbol{\alpha}|=0}\Omega_{\boldsymbol{\alpha}}(s)\Big|
	ds
	+
	\int_{t}^{\infty}e^{-\beta(t-s)}
	\Big|\sum_{|\boldsymbol{\alpha}|=0}\Omega_{\boldsymbol{\alpha}}(s)\Big|
	ds
	\\
	&=&
	\int_{t_0}^{\infty}e^{-\beta(t-\tau)}
	\Big|\sum_{|\boldsymbol{\alpha}|=0}\Omega_{\boldsymbol{\alpha}}(\tau)\Big|
	d\tau,
\\
	J_2(t)&:=&
	\int_{t_0}^t
	e^{-\gamma_1(t-s)}
	\Big(|\omega_k(s)|+|\omega'_k(s)|+|\omega''_k(s)|\Big)
	\sum_{|\boldsymbol{\alpha}|=1}^4\Big|\Omega_{\boldsymbol{\alpha}}(s)\Big|
	ds
	\\
	&&
	+	\int_{t}^\infty
	e^{-\gamma_2(t-s)}
	\Big(|\omega_k(s)|+|\omega'_k(s)|+|\omega''_k(s)|\Big)
	\sum_{|\boldsymbol{\alpha}|=1}^4\Big|\Omega_{\boldsymbol{\alpha}}(s)\Big|
	ds
\\
&\le&
	\rho\int_{t_0}^t
	e^{-\gamma_1(t-s)}
	\Big(|\omega_k(s)|+|\omega'_k(s)|+|\omega''_k(s)|\Big)
	ds
\\
&&
	+\rho	\int_{t}^\infty
	e^{-\gamma_2(t-s)}
	\Big(|\omega_k(s)|+|\omega'_k(s)|+|\omega''_k(s)|\Big)
	ds
\\
	&\le&
	\Phi_k\;\rho \left(
	\frac{1-\exp(-\sigma_1(t_0-t))}{-(-\sigma_1+\beta)}
	+\frac{1}{-\sigma_2+\beta}
	\right)\;
	\int_{t_0}^{\infty}e^{-\beta(t-\tau)}
	\Big|\sum_{|\boldsymbol{\alpha}|=0}\Omega_{\boldsymbol{\alpha}}(\tau)\Big|
	d\tau
\\
	&\le&
	\Phi_k\;\rho \;\upvarsigma
	\int_{t_0}^{\infty}e^{-\beta(t-\tau)}
	\Big|\sum_{|\boldsymbol{\alpha}|=0}\Omega_{\boldsymbol{\alpha}}(\tau)\Big|
	d\tau.
\end{eqnarray*}
Hence the thesis of the inductive steps holds with 
$\Phi_{n}=\hat{A}(1+\Phi_{n-1}\rho\;\upvarsigma)$.

We proceed in an analogous way to the case  
$(\gamma_1,\gamma_2,\gamma_3)\in \mathbb{R}^3_{---}$ 
and deduce that \eqref{eq17:uniformly_bounded_2} is satisfied
with $\Phi=\hat{A}/(1-\rho\;\upvarsigma\hat{A})>0$.

Therefore, the sequence $\{\Phi_{n}\}$ is convergent 
and $z_2$ (the limit of $\omega_n$
in the topology of $C^2_0([t_0,\infty])$) satisfies \eqref{eq:asymptotic_teo:form}.

\vspace{0.5cm}
\noindent
{\it Proof of \eqref{eq:asymptotic_teo:form}
for  $(\gamma_1,\gamma_2,\gamma_3)\in \mathbb{R}^3_{++-}$ 
and for  $(\gamma_1,\gamma_2,\gamma_3)\in \mathbb{R}^3_{+++}$.}
The proof of \eqref{eq:asymptotic_teo:form}
for  $(\gamma_1,\gamma_2,\gamma_3)\in \mathbb{R}^3_{++-}$ 
and $(\gamma_1,\gamma_2,\gamma_3)\in \mathbb{R}^3_{+++}$
are completely analogous
to the proofs of \eqref{eq:asymptotic_teo:form}
for $(\gamma_1,\gamma_2,\gamma_3)\in \mathbb{R}^3_{---}$ 
and $(\gamma_1,\gamma_2,\gamma_3)\in \mathbb{R}^3_{+--}$, respectively.
\end{proof}

\section{$L^p$-solutions for \eqref{eq:general_no_lin}}
\label{section:lpsolution}

\begin{theorem}
\label{teo:solution_integrab}
Let us consider that the hypotheses of Theorem~\ref{teo:solution_asymptotic}
are satisfied
and denote by $\lceil \cdot \rceil$ the ceiling function and
by $W^{2,p}([t_0,\infty[)$ the Sobolev space  defined by
 \begin{eqnarray}
 \lceil x \rceil&=& n+1, \quad x\in]n,n+1],\quad n\in\mathbb{Z},\quad\mbox{and}
 \label{eq:ceiling_function}
 \\
W^{2,p}([t_0,\infty[)&=&\Big\{u\in L^{p}([t_0,\infty[)
\quad:\quad 
u',u''\in L^{p}([t_0,\infty[)\Big\},
\label{eq:sobolev_space}
\end{eqnarray} 
respectively. 
Moreover assume that the condition
\begin{enumerate}
 \item[($P_3$)] 
The coefficients $\Omega_{\boldsymbol{\alpha}}$ of $\mathbb{P}$
are such that $\Omega_{\boldsymbol{\alpha}}\in L^p([t_0,\infty[)$
for $|\boldsymbol{\alpha}|=0$ and  for  $|\boldsymbol{\alpha}|\ge 1$
the functions  $\Omega_{\boldsymbol{\alpha}}$ are of the following type
$
 \Omega_{\boldsymbol{\alpha}}(t)=
 \lambda_{\boldsymbol{\alpha},p}\Omega_{\boldsymbol{\alpha},p}(t)
 +\lambda_{\boldsymbol{\alpha},c},
$
where $\lambda_{\boldsymbol{\alpha},p}$ and $\lambda_{\boldsymbol{\alpha},c}$ are real constants
and $\Omega_{\boldsymbol{\alpha},p}\in L^p([t_0,\infty[)$.
\end{enumerate}
is satisfied.
Then,  the following assertions are valid for $z$, 
the  solution of~\eqref{eq:general_no_lin}, 
\begin{enumerate}
 \item[(i)] $z$  is a function belongs to the Sobolev space $W^{2,p}([t_0,\infty[).$
  \item[(ii)] Let $m$ the number defined by
$m=\lceil p \rceil -1$ for $p\in ]1,4]$
  and by $m=4$ for $p\in ]4,\infty]$.  There exists $m+1$ functions, denoted by
  $\Theta_{1},\ldots,\Theta_{m}$ and $\Psi,$  
  such that $\Theta_{k}\in W^{2,p/k}([t_0,\infty[),$
  for $k=1,\ldots,m$, 
  $\Psi\in W^{2,1}([0,\infty[)$ and
  $z^{(\ell)}(t)=\sum_{k=1}^m\Theta^{(\ell)}_{k}(t)+\Psi^{(\ell)}(t)$
  for $\ell=0,1,2.$
\end{enumerate}
\end{theorem}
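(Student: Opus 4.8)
Here is how I would approach the proof. (I will write it as a strategy, citing the results already established.)

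\medskip

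The plan is to read both assertions off the integral representation \eqref{eq:integ_equation} of the solution, combined with the exponential bounds \eqref{eq:bound:gi}. The key analytic remark is that, by \eqref{eq:bound:gi}, each of $|g(t,s)|$, $|\partial_t g(t,s)|$, $|\partial_t^2 g(t,s)|$ is dominated by $\hat{A}$ times a function of $t-s$ lying in $L^1(\mathbb{R})$; hence, by Young's inequality for convolutions, the linear operator
\[
\mathcal{T}h(t):=\int_{t_0}^{\infty}g(t,s)\,h(s)\,ds ,
\]
together with $(\mathcal{T}h)'(t)=\int_{t_0}^{\infty}\partial_t g(t,s)\,h(s)\,ds$ and $(\mathcal{T}h)''(t)=\int_{t_0}^{\infty}\partial_t^2 g(t,s)\,h(s)\,ds$ (these being the genuine derivatives, by differentiation under the integral sign exactly as in the proof of Theorem~\ref{thm:general_no_lin}), maps $L^{q}([t_0,\infty[)$ boundedly into $W^{2,q}([t_0,\infty[)$ for every $q\in[1,\infty]$. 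By \eqref{eq:integ_equation}, $z=\mathcal{T}\big[\mathbb{P}(\cdot,z,z',z'')\big]$.

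I would prove (i) first. Since $z\in C_0^2([t_0,\infty[)$, the functions $z,z',z''$ are bounded, and the estimates in the proof of Theorem~\ref{teo:solution_asymptotic} provide a kernel $\kappa\in L^1(\mathbb{R})$ and a constant $C>0$ with $|z(t)|+|z'(t)|+|z''(t)|\le C\int_{t_0}^{\infty}\kappa(t-s)\,\Big|\sum_{|\boldsymbol{\alpha}|=0}\Omega_{\boldsymbol{\alpha}}(s)\Big|\,ds$ for all $t\ge t_0$ (the iteration constants $\Phi_n$ there converge because $\rho\hat{A}\upvarsigma<1$ by \eqref{eq:hipotesis3}). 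Since $\sum_{|\boldsymbol{\alpha}|=0}\Omega_{\boldsymbol{\alpha}}\in L^{p}([t_0,\infty[)$ by $(P_3)$, Young's inequality yields $|z|+|z'|+|z''|\in L^{p}([t_0,\infty[)$, i.e. $z\in W^{2,p}([t_0,\infty[)$.

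For (ii), I would use (i) and $(P_3)$ to split $\mathbb{P}(s,z(s),z'(s),z''(s))$. Writing $\Omega_{\boldsymbol{\alpha}}=\lambda_{\boldsymbol{\alpha},p}\Omega_{\boldsymbol{\alpha},p}+\lambda_{\boldsymbol{\alpha},c}$ for $|\boldsymbol{\alpha}|\ge1$, set
\begin{align*}
F_1(s)={}&\sum_{|\boldsymbol{\alpha}|=0}\Omega_{\boldsymbol{\alpha}}(s)
+\sum_{k=1}^{4}\sum_{|\boldsymbol{\alpha}|=k}\lambda_{\boldsymbol{\alpha},p}\,\Omega_{\boldsymbol{\alpha},p}(s)\,z(s)^{\alpha_1}z'(s)^{\alpha_2}z''(s)^{\alpha_3}\\
&+\sum_{|\boldsymbol{\alpha}|=1}\lambda_{\boldsymbol{\alpha},c}\,z(s)^{\alpha_1}z'(s)^{\alpha_2}z''(s)^{\alpha_3},
\end{align*}
and, for $k=2,3,4$, $F_k(s)=\sum_{|\boldsymbol{\alpha}|=k}\lambda_{\boldsymbol{\alpha},c}\,z(s)^{\alpha_1}z'(s)^{\alpha_2}z''(s)^{\alpha_3}$, so that $\mathbb{P}(s,z(s),z'(s),z''(s))=\sum_{k=1}^{4}F_k(s)$ and, by linearity, $z^{(\ell)}=\sum_{k=1}^{4}(\mathcal{T}[F_k])^{(\ell)}$ for $\ell=0,1,2$. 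By (i), $z,z',z''\in L^{p}\cap L^{\infty}$, so H\"older's inequality gives $z^{\alpha_1}(z')^{\alpha_2}(z'')^{\alpha_3}\in L^{p/k}\cap L^{\infty}$ for $|\boldsymbol{\alpha}|=k$; since moreover the products $\Omega_{\boldsymbol{\alpha},p}\,z^{\alpha_1}(z')^{\alpha_2}(z'')^{\alpha_3}$ lie in $L^p$ (an $L^p$ function times a bounded one) and $\sum_{|\boldsymbol{\alpha}|=0}\Omega_{\boldsymbol{\alpha}}\in L^p$, we obtain $F_1\in L^{p}$ and $F_k\in L^{p/k}\cap L^{\infty}$ for $k=2,3,4$. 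Now put $\Theta_k=\mathcal{T}[F_k]$ for $1\le k\le m$ and $\Psi=\sum_{k=m+1}^{4}\mathcal{T}[F_k]$ (an empty sum, hence $\Psi\equiv0$, when $m=4$). If $k\le m$ then $k\le p$, so $p/k\ge1$ and $\Theta_k\in W^{2,p/k}([t_0,\infty[)$; if $k\ge m+1$ then $k\ge\lceil p\rceil\ge p$, so $p/k\le1$, and from $\int|F_k|=\int|F_k|^{p/k}|F_k|^{1-p/k}\le\|F_k\|_{\infty}^{1-p/k}\int|F_k|^{p/k}<\infty$ we get $F_k\in L^{1}$, whence $\mathcal{T}[F_k]\in W^{2,1}([t_0,\infty[)$ and thus $\Psi\in W^{2,1}([t_0,\infty[)$. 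Collecting, $\Theta_k\in W^{2,p/k}([t_0,\infty[)$ for $k=1,\dots,m$, $\Psi\in W^{2,1}([t_0,\infty[)$ and $z^{(\ell)}=\sum_{k=1}^{m}\Theta_k^{(\ell)}+\Psi^{(\ell)}$ for $\ell=0,1,2$, which is (ii).

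The step needing most care — and the one I expect to be the crux — is the bookkeeping that lines up the H\"older exponents $p/k$ with the threshold defining $m$: one must check that $m=\lceil p\rceil-1$ for $p\in\,]1,4]$, resp. $m=4$ for $p\in\,]4,\infty]$, is precisely the largest $k\le4$ with $p/k>1$, so that each constant-coefficient degree-$k$ contribution either lands in a bona fide Sobolev space $W^{2,p/k}$ with exponent $\ge1$ or, when $p/k\le1$, is absorbed into $\Psi$ via the interpolation $L^{p/k}\cap L^{\infty}\hookrightarrow L^{1}$ above. Everything else is a routine use of H\"older's inequality, Young's inequality for convolutions, and the bounds \eqref{eq:bound:gi}.
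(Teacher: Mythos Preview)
Your argument is correct and follows the same overall strategy as the paper: use the asymptotic bound from Theorem~\ref{teo:solution_asymptotic} together with Young's inequality to get (i), then decompose $\mathbb{P}(\cdot,z,z',z'')$ according to $(P_3)$ and push each piece through the Green operator to get (ii).

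The one genuine difference is the decomposition in (ii). The paper splits each degree-$k$ block into $\mathcal{I}_{k,p}+\mathcal{I}_{k,c}$ using H\"older on $\Omega_{\boldsymbol\alpha,p}\cdot z^{\alpha_1}(z')^{\alpha_2}(z'')^{\alpha_3}\in L^{p/(k+1)}$, then pairs $\mathcal{I}_{k-1,p}$ with $\mathcal{I}_{k,c}$ to form $\mathcal{H}_k\in W^{2,p/k}$; this leaves a dangling term $\mathcal{I}_{4,p}\in W^{2,p/5}$ that must be absorbed into $\Psi$. You instead exploit the boundedness of $z,z',z''$ to put \emph{all} $\Omega_{\boldsymbol\alpha,p}$-terms (for every degree) directly into $F_1\in L^p$, so only the constant-coefficient monomials of degree $k\ge 2$ populate $F_k$. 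This is a cleaner bookkeeping: it makes $\Theta_1$ carry the entire $L^p$-perturbation part, eliminates the fifth piece $\mathcal{I}_{4,p}$, and renders the case $m=4$ trivially $\Psi\equiv 0$. The paper's grouping, on the other hand, tracks more finely how each $L^p$-coefficient interacts with the nonlinear degree, at the cost of one extra summand. Both constructions satisfy the conclusion of the theorem.
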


\begin{proof}
\noindent
{\it (i).}
Let us denote by $\gamma_\beta$ and $\mathbb{Y}$ the functions defined as follows
\begin{eqnarray*}
\gamma_{\beta}(t)&=&
\left\{
\begin{array}{ll}
 \exp(-\beta t) & t\ge t_0,\\
 0&\mbox{elsewhere},
\end{array}
\right.
\qquad
\mbox{ for some $t_0\ge 0$,}
\\
\mathbb{Y}(t)&=&\int_{t_0}^\infty \exp(-\beta(t-s))
\sum_{|\boldsymbol{\alpha}|=0}\Omega_{\boldsymbol{\alpha}}(s)
ds.
\end{eqnarray*}
We note that 
$\mathbb{Y}
=\gamma_{\beta}\ast \sum_{|\boldsymbol{\alpha}|=0}\Omega_{\boldsymbol{\alpha}}$,
where $\ast$ denotes the convolution.
Then, by the convolution properties and the hypothesis
that the coefficients $\Omega_{\boldsymbol{\alpha}}$
of $\mathbb{P}$ with $|\boldsymbol{\alpha}|=0$ 
are belong of $L^p([t_0,\infty[)$,  we follow
that $\mathbb{Y}$ is belongs to $L^{p}([t_0,\infty[)$.
Thus, by \eqref{eq:asymptotic_teo:form},  we deduce that
$z,z',z'' \in L^{p}([t_0,\infty[)$ 
or equivalently by \eqref{eq:sobolev_space}
$z$ is belongs ${W^{2,p}}([0,\infty[).$

\vspace{0.5cm} 
\noindent
{\it (ii).} 
By  \eqref{eq:operator_fix_point}-\eqref{eq:operator_equation}
and \eqref{eq:general_no_lin:2},
we have that $z_i$ can be rewritten as follows
\begin{eqnarray}
z(s)&=&\int_{t_0}^\infty g(t,s)\mathbb{P}\Big(s,z(s),z'(s),z''(s)\Big)ds
\nonumber\\
&=&\sum_{k=0}^4\int_{t_0}^\infty g(t,s)
\sum_{|\boldsymbol{\alpha}|=k}\Omega_{\boldsymbol{\alpha}}(s)
[z(s)]^{\alpha_1}[z'(s)]^{\alpha_2}[z''(s)]^{\alpha_3}ds
:=\sum_{k=0}^4\mathcal{I}_k(s).
\label{eq:descomp_teo:solution_integrab}
\end{eqnarray}
Now, we apply the hypothesis ($P_3$) to construct the functions
$\Theta^{i}$ and $\Psi$. 

If $|\boldsymbol{\alpha}|=0,$
by ($P_3$) and similar arguments to those used in the proof
of item (i) we have that $\mathcal{I}_0\in W^{2,p}([t_0,\infty[)$.
Moreover, if $|\boldsymbol{\alpha}|=k\in\{1,2,3,4\}$,
by application of ($P_3$) and item~{\it (i)} we deduce that
$z,z',z''\in W^{2,p}([t_0,\infty[)$, which implies that
$[z]^{\alpha_1}[z']^{\alpha_2}[z'']^{\alpha_3}
\in L^{p/k}([t_0,\infty[)$. Now, by ($P_3$) for $|\boldsymbol{\alpha}|>1$
we have that
\begin{eqnarray}
\mathcal{I}_k(s)
&=&\int_{t_0}^\infty g(t,s)
\sum_{|\boldsymbol{\alpha}|=k}\Big(
 \lambda_{\boldsymbol{\alpha},p}\Omega_{\boldsymbol{\alpha},p}(s)
 +\lambda_{\boldsymbol{\alpha},c}\Big)
[z(s)]^{\alpha_1}[z'(s)]^{\alpha_2}[z''(s)]^{\alpha_3}ds
\nonumber\\
&=&\sum_{|\boldsymbol{\alpha}|=k}
\lambda_{\boldsymbol{\alpha},p}
\int_{t_0}^\infty g(t,s)
 \Omega_{\boldsymbol{\alpha},p}(s)
[z(s)]^{\alpha_1}[z'(s)]^{\alpha_2}[z''(s)]^{\alpha_3}ds
\nonumber\\
&&
+
\sum_{|\boldsymbol{\alpha}|=k}
\lambda_{\boldsymbol{\alpha},c}
\int_{t_0}^\infty g(t,s)
[z(s)]^{\alpha_1}[z'(s)]^{\alpha_2}[z''(s)]^{\alpha_3}ds
\nonumber\\
&:=& \mathcal{I}_{k,p}(s)+\mathcal{I}_{k,c}(s),
\label{eq:jp_descomposition}
\end{eqnarray}
i.e. $\mathcal{I}_k$ can be rewritten as the linear combination of 
the functions $\mathcal{I}_{k,p}\in W^{2,p/(1+k)}([t_0,\infty[)$ and  
$\mathcal{I}_{k,c}\in W^{2,p/k}([t_0,\infty[)$.

By \eqref{eq:descomp_teo:solution_integrab} and \eqref{eq:jp_descomposition} 
we have that $z$ can be discomposed 
\begin{eqnarray*}
 z(s)&=&\mathcal{I}_{0}(s)+\sum_{k=1}^{4}\mathcal{I}_{k,p}(s)
 +\sum_{k=1}^{4}\mathcal{I}_{k,p}(s)
 \\
 &=&\Big(\Big[\mathcal{I}_{0}+\mathcal{I}_{1,c}\Big]
 +\Big[\mathcal{I}_{1,p}+\mathcal{I}_{2,c}\Big]
 +\Big[\mathcal{I}_{2,p}+\mathcal{I}_{3,c}\Big]
 +\Big[\mathcal{I}_{3,p}+\mathcal{I}_{4,c}\Big]
 +\mathcal{I}_{4,p}\Big)(s),
\\
 &=&\Big(\mathcal{H}_1
 +\mathcal{H}_2
 +\mathcal{H}_3
 +\mathcal{H}_4
 +\mathcal{I}_{4,p}\Big)(s),
\end{eqnarray*}
with 
\begin{eqnarray*}
&&\mathcal{H}_1:=\mathcal{I}_{0}+\mathcal{I}_{1,c}\in W^{2,p}([t_0,\infty[),\quad
\mathcal{H}_2:=\mathcal{I}_{1,p}+\mathcal{I}_{2,c}\in W^{2,p/2}([t_0,\infty[),\quad
\\
&&\mathcal{H}_3:=\mathcal{I}_{2,p}+\mathcal{I}_{3,c}\in W^{2,p/3}([t_0,\infty[),\quad
\mathcal{H}_4:=\mathcal{I}_{3,p}+\mathcal{I}_{4,c}\in W^{2,p/4}([t_0,\infty[),
\\
&&\mathcal{I}_{4,p}\in W^{2,p/5}([t_0,\infty[).
\end{eqnarray*}
Then, we have that the functions $\Theta_{i}$ and $\Psi$
satisfying the requirements  of the Theorem are defined as follows
\begin{eqnarray*}
\begin{array}{llll}
 p\in ]1,2],&m=1, &
 \Theta_{k}=\mathcal{H}_k\mbox{ for } k=1, 
 & 
 \Psi =\mathcal{H}_2+\mathcal{H}_3+\mathcal{H}_4+\mathcal{I}_{4,p}
 \\
 p\in ]2,3],&m=2, & 
 \Theta_{k}=\mathcal{H}_k\mbox{ for } k=1,2,
 &
 \Psi =\mathcal{H}_3+\mathcal{H}_4+\mathcal{I}_{4,p},
 \\
 p\in ]3,4],&m=3, &
 \Theta_{k}=\mathcal{H}_k\mbox{ for } k=1,2,3,
 &
 \Psi =\mathcal{H}_4+\mathcal{I}_{4,p},
 \\
 p\in ]4,\infty[,&m=4, & 
 \Theta_{k}=\mathcal{H}_k\mbox{ for } k=1,2,3,4,
 &
 \Psi =\mathcal{I}_{4,p},
\end{array}
\end{eqnarray*}
Thus, the result is valid for all $p>1$ and the Theorem is proved.  
\end{proof}

\section{Applications}
\label{sec:applications}

\subsection{The Poincar\'e problem and Poincar\'e type result.} $ $

The fourth order linear differential equation of Poincar\'e type
is given by
\begin{eqnarray}
y^{({\rm iv})}
+[a_3+r_3 (t)] y'''
+[a_2+r_2 (t)] y''
+[a_1+r_1 (t)] y'
+[a_0+r_0 (t)] y=0,
\label{eq:intro_uno}
\end{eqnarray}
where $a_i$ are constants and 
$r_i$ are real-valued functions.
Note that \eqref{eq:intro_uno} is a perturbation of the following 
constant coefficient equation:
\begin{eqnarray}
y^{({\rm iv})}
+a_3 y'''
+a_2 y''
+a_1 y'
+a_0 y=0.
 \label{eq:intro_dos}
\end{eqnarray}
Now, let us consider the new variable $z$ of the following type 
\begin{eqnarray}
z(t)=\frac{y'(t)}{y(t)}-\mu 
\quad\mbox{or equivalently}\quad
y(t)=\exp\Big(\int_{t_0}^t (z(s)+\mu )ds\Big),
\label{eq:general_change_var}
\end{eqnarray}
where $y$ is a solution of \eqref{eq:intro_uno} and
$\mu$ is an arbitrary root of the characteristic polynomial associated 
to \eqref{eq:intro_dos}. 
Then, differentiating  $y$ in \eqref{eq:general_change_var}  and
replacing the results 
in \eqref{eq:intro_uno}, we deduce that $z$ is a solution of 
the following third order nonlinear equation
\begin{eqnarray}
&&z'''+[4\mu  +a_3]z''
    +[6\mu ^2+3a_3\mu  +a_2]z'
    +[4\mu  ^3+3\mu  ^2a_3+2\mu  a_2
    +a_1]z 
    \nonumber\\
    & & \qquad
    =-\big[ \mu^3r_3(t)+\mu  ^2r_2(t)+\mu  r(t)+r_0(t)\big]
    -\big[3\mu  ^2r_3(t)+2\mu  r_2(t)+r_1(t)\big]z
    \nonumber\\
    & & \qquad \quad   
    -\big[3\mu  r_3(t)+r_2(t)\big]z'-r_3(t)z''
    -\big[12\mu    +3a_3+3r_3(t)\big]zz'
    -4zz''
    \nonumber\\
    & & \qquad\quad
    -\big[6\mu  ^2+3\mu  a_3+a_2+3\mu  r_3(t)+r_2(t)]z^2
    -3(z')^2
    -6z^2z'
    -\big[4\mu  +r_3(t)\big]z^3
    -z^4.
    \hspace{1cm}\mbox{$ $}
   \label{eq:ricati_original}
\end{eqnarray}
Then, the analysis of original linear perturbed
equation of fourth order 
\eqref{eq:intro_uno} is translated to the analysis of a nonlinear
third order equation  \eqref{eq:ricati_original}.

We note that the equation \eqref{eq:ricati_original}
is of the type \eqref{eq:general_no_lin}, since the 
constant coefficients $b_i$ are 
\begin{subequations}
\label{eq:notation_ricc}
\begin{eqnarray}
\begin{array}{rclcrcl}
  b_0 =4\mu ^3+3\mu ^2a_3+2\mu  a_2+a_1, 
  \quad
  b_1 =6\mu ^2+3\mu  a_3+a_2,
  \quad
  b_2 = 4\mu +a_3,  
\end{array}
\label{eq:notation_ricc:1}
\end{eqnarray}
and the functions $\Omega_{\boldsymbol{\alpha}}$ 
defining the coefficients of the polynomial 
$\mathbb{R}$ are  given by 
\begin{eqnarray}
\Omega_{\boldsymbol{\alpha}}(t)
=\left\{
\begin{array}{lcl}
 -\big(\mu ^3r_3(t)+\mu ^2r_2(t)+\mu  r_1(t)+r_0(t)\big), &
 \quad & \boldsymbol{\alpha}=(0,0,0),
 \\
 -\big(3\mu  ^2r_3(t)+2\mu  r_2(t)+r_1(t)\big), & & \boldsymbol{\alpha}=(1,0,0),
 \\
 -\big(3\mu  r_3(t)+r_2(t)\big), & & \boldsymbol{\alpha}=(0,1,0),
 \\
 -r_3(t),& & \boldsymbol{\alpha}=(0,0,1),
 \\
 -\big(12\mu  +3a_3+r_3(t)\big),& & \boldsymbol{\alpha}=(1,1,0),
 \\
 -4,& & \boldsymbol{\alpha}=(1,0,1),
 \\
 -\big(6\mu  ^2+3\mu  a_3+a_2
  +r_2(t) +3\mu  r_3(t)\big),& & \boldsymbol{\alpha}=(2,0,0),
 \\
 -3,& & \boldsymbol{\alpha}=(0,2,0),
 \\
 -6,& & \boldsymbol{\alpha}=(2,1,0),  
 \\
-\big(4+r_3(t)\big),& & \boldsymbol{\alpha}=(3,0,0),
 \\
 -1,& & \boldsymbol{\alpha}=(4,0,0),
 \\
 0,& & \mbox{otherwise}.
\end{array}
\right.
\label{eq:notation_ricc:2}
\end{eqnarray}
\end{subequations}
Thus, we can apply the results of section~\ref{sect:analisis_of_general}.

\begin{theorem}
\label{lem:solution_perturbation}
Let us consider that the hypotheses 
\begin{enumerate}
 \item[(H$_1$)] The set of characteristic  roots for  \eqref{eq:intro_dos} 
 is $\Big\{\lambda_i, i=\overline{1,4}\;:\;
\lambda_1>\lambda_2>\lambda_3>\lambda_4\Big\}\subset\mathbb{R}$.

 \item[(H$_2$)] Let  $p:\mathbb{R}^2\to\mathbb{R}$ the function defined by
 \begin{eqnarray}
  p(\mu,s)=-\big(\mu ^3r_3(t)+\mu ^2r_2(t)+\mu  r_1(t)+r_0(t)\big).
  \label{eq:termino_constante}
 \end{eqnarray}
 The perturbation functions 
 are selected such that 
 $\mathcal{G}(p(\lambda_i,\cdot))(t)\to 0$ and 
 $\mathcal{L}(r_j)(t)\to 0$, $j=0,1,2,3$,
 when $t\to\infty$, 
where $\mathcal{G}$ and 
$\mathcal{L}$ are the functionals  defined on 
\eqref{eq:functyonal_G} and \eqref{eq:functyonal_L},
respectively.

\end{enumerate}
are satisfied.
Then, for each $i\in\{1,\ldots,4\}$, the equation~\eqref{eq:ricati_original} 
with $\mu=\lambda_i$ has a unique solution
$z_i$ such that $z_i\in C_0^2([t_0,\infty[)$.
\end{theorem}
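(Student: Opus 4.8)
The plan is to verify that for each fixed $i\in\{1,\ldots,4\}$ the equation \eqref{eq:ricati_original} with $\mu=\lambda_i$ is literally an instance of \eqref{eq:general_no_lin}, and then to check that hypotheses (H$_1$)--(H$_2$) imply the hypotheses ($P_1$)--($P_2$) of Theorem~\ref{thm:general_no_lin}, so that the conclusion follows immediately from that theorem. The structural identification is already recorded in \eqref{eq:notation_ricc}: the constant coefficients $b_0,b_1,b_2$ are given by \eqref{eq:notation_ricc:1} and the variable coefficients $\Omega_{\boldsymbol{\alpha}}$ by \eqref{eq:notation_ricc:2}, so \eqref{eq:ricati_original} with $\mu=\lambda_i$ has the form \eqref{eq:general_no_lin:1}--\eqref{eq:general_no_lin:2} with $\mathbb{P}$ a polynomial of degree four in $(z,z',z'')$.

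First I would check ($P_1$). The characteristic polynomial of the homogeneous part $z'''+b_2z''+b_1z'+b_0z=0$ is $Q(\gamma)=\gamma^3+b_2\gamma^2+b_1\gamma+b_0$. Substituting \eqref{eq:notation_ricc:1} and using that $\lambda_i$ is a root of the quartic $P_4(\lambda)=\lambda^4+a_3\lambda^3+a_2\lambda^2+a_1\lambda+a_0$ associated to \eqref{eq:intro_dos}, a direct computation shows $Q(\gamma)=P_4(\gamma+\lambda_i)/\gamma$ evaluated appropriately; more precisely the roots of $Q$ are exactly $\gamma_k=\lambda_k-\lambda_i$ for the three indices $k\neq i$. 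This is the standard fact behind the change of variable \eqref{eq:general_change_var}: shifting $y\mapsto e^{-\lambda_i t}y$ turns \eqref{eq:intro_dos} into an equation whose characteristic roots are $\lambda_k-\lambda_i$, and the logarithmic-derivative substitution removes the zero root. By (H$_1$) the $\lambda_k$ are real and distinct, hence the three numbers $\gamma_k=\lambda_k-\lambda_i$ ($k\neq i$) are real, nonzero and distinct, which is exactly ($P_1$).

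Next I would check ($P_2$). Looking at \eqref{eq:notation_ricc:2}, the coefficient with $|\boldsymbol{\alpha}|=0$ is precisely $\Omega_{(0,0,0)}=p(\lambda_i,\cdot)$ in the notation \eqref{eq:termino_constante}, so the first requirement $\mathcal{G}(\sum_{|\boldsymbol{\alpha}|=0}\Omega_{\boldsymbol{\alpha}})(t)\to 0$ is exactly the hypothesis $\mathcal{G}(p(\lambda_i,\cdot))(t)\to 0$ from (H$_2$). The coefficients with $|\boldsymbol{\alpha}|=1$ are finite linear combinations (with constant coefficients depending only on $\lambda_i,a_j$) of $r_0,r_1,r_2,r_3$ plus, in the $(1,1,0)$ entry, a constant term $-(12\lambda_i+3a_3)$; here I would need to be slightly careful, since $\mathcal{L}$ applied to a nonzero constant need not tend to zero — but note the constant part of $\Omega_{(1,1,0)}$ can be absorbed, as the corresponding Green-function integral of a constant against $|g|+|\partial_t g|+|\partial_t^2 g|$ is bounded, not necessarily vanishing; the vanishing requirement in ($P_2$) is only on $\mathcal{L}(\sum_{|\boldsymbol{\alpha}|=1}|\Omega_{\boldsymbol{\alpha}}|)$, so one uses that the variable parts $r_j$ satisfy $\mathcal{L}(r_j)(t)\to 0$ and checks whether the residual constant is harmless. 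This is the one point of the argument that requires genuine attention rather than bookkeeping. For the last requirement, the coefficients with $2\le|\boldsymbol{\alpha}|\le 4$ are either absolute constants ($-4,-3,-6,-1$) or constants plus $r_2$ or $r_3$, and since $\mathcal{L}(1)(t)$ is bounded (the kernel decays exponentially in $t-s$ away from the diagonal in each case, so the integral is uniformly bounded) and $\mathcal{L}(r_j)(t)\to 0$, the sum $\sum_{k=1}^4\mathcal{L}(\sum_{|\boldsymbol{\alpha}|=k}|\Omega_{\boldsymbol{\alpha}}|)(t)$ is bounded as $t\to\infty$, giving the third part of ($P_2$).

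Having verified ($P_1$) and ($P_2$) for each $\mu=\lambda_i$, Theorem~\ref{thm:general_no_lin} applies and yields a unique $z_i\in C_0^2([t_0,\infty[)$ solving \eqref{eq:ricati_original}, which is the assertion. The main obstacle I anticipate is precisely the treatment of the constant (non-$L$-small) parts appearing in the $\Omega_{\boldsymbol{\alpha}}$ for $|\boldsymbol{\alpha}|\ge 1$: one must confirm that ($P_2$) as stated only demands $\mathcal{L}$-vanishing for the linear-in-$z$ coefficients and mere boundedness for the higher-degree ones, and that the exponential decay of the Green function kernels \eqref{eq:green_function:g1}--\eqref{eq:green_function:g4} makes $\mathcal{L}(\text{const})(t)$ bounded; everything else is a direct substitution from \eqref{eq:notation_ricc} into the hypotheses of Theorem~\ref{thm:general_no_lin}.
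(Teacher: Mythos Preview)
Your overall approach is exactly the paper's: identify \eqref{eq:ricati_original} as an instance of \eqref{eq:general_no_lin} via \eqref{eq:notation_ricc}, verify that (H$_1$)$\Rightarrow$($P_1$) and (H$_2$)$\Rightarrow$($P_2$), and invoke Theorem~\ref{thm:general_no_lin}. The argument for ($P_1$) via $\gamma_k=\lambda_k-\lambda_i$ is the same as the paper's (which writes out the algebraic identities explicitly and records the four sign patterns in \eqref{eq:lambda_vs_gamma}).

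There is, however, a small but consequential indexing slip in your treatment of ($P_2$): the multi-index $(1,1,0)$ has $|\boldsymbol{\alpha}|=1+1+0=2$, not $1$. The only coefficients with $|\boldsymbol{\alpha}|=1$ in \eqref{eq:notation_ricc:2} are $\Omega_{(1,0,0)}=-(3\mu^2r_3+2\mu r_2+r_1)$, $\Omega_{(0,1,0)}=-(3\mu r_3+r_2)$, and $\Omega_{(0,0,1)}=-r_3$, each a pure linear combination of the $r_j$ with \emph{no} constant term. Hence $\sum_{|\boldsymbol{\alpha}|=1}|\Omega_{\boldsymbol{\alpha}}|$ is bounded by a fixed constant times $|r_1|+|r_2|+|r_3|$, and (H$_2$) gives $\mathcal{L}(\sum_{|\boldsymbol{\alpha}|=1}|\Omega_{\boldsymbol{\alpha}}|)(t)\to 0$ directly, exactly as in \eqref{eq:L_to_cero}. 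The ``obstacle'' you anticipated does not exist: the constant pieces (such as $-(12\mu+3a_3)$, $-4$, $-(6\mu^2+3\mu a_3+a_2)$, $-3$, $-6$, $-4$, $-1$) occur only for $|\boldsymbol{\alpha}|\ge 2$, where ($P_2$) demands mere boundedness, and your observation that $\mathcal{L}(1)(t)$ is bounded (by the exponential decay of the kernels in \eqref{eq:green_function:g1}--\eqref{eq:green_function:g4}) handles them; this is precisely \eqref{eq:L_bounded} in the paper. With this correction the verification of ($P_2$) is routine and the proof is complete.
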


\begin{proof} The prove is reduced to apply the Theorem~\ref{thm:general_no_lin}.
Indeed, in the following we prove that hypotheses   ($P_1$) and ($P_2$)
are satisfied.

The hypothesis (H$_1$) implies ($P_1$).
This result is a consequence of the following fact:
if  $\lambda_i$   and $\lambda_j$ are  two distinct  
characteristic roots of the polynomial associated to \eqref{eq:intro_dos},
then $\lambda_j-\lambda_i$ is a 
root of the characteristic polynomial associated with 
\eqref{eq:ricati_original} when the right hand side is zero.
Indeed, considering $\lambda_i\not = \lambda_j$ 
satisfying the characteristic polynomial associated to 
\eqref{eq:intro_dos}, subtracting the equalities, dividing the result
by $\lambda_j-\lambda_i$ and using the identities
\begin{eqnarray*}
  \lambda_j^3+\lambda_j^2\lambda_i+\lambda_j\lambda_i^2+\lambda_i^3 &=& (\lambda_j-\lambda_i)^3+4\lambda_i(\lambda_j-\lambda_i)^2+6\lambda_i^2(\lambda_j-\lambda_i)+4\lambda_i^3 \\
  a_3(\lambda_j^2+\lambda_j\lambda_i+\lambda_i^2) &=& a_3(\lambda_j-\lambda_i)^2+3a_3\lambda_i(\lambda_j-\lambda_i)+3a_3\lambda_i^2 \\
  a_2(\lambda_j-\lambda_i) &=&
  a_2(\lambda_j-\lambda_i)+2\lambda_ia_2,
\end{eqnarray*}
we deduce that $\lambda_j-\lambda_i$ is a 
root of the characteristic polynomial associated to 
\eqref{eq:ricati_original}.
Thus, if (H$_1$) holds we have that
\begin{subequations}
\label{eq:lambda_vs_gamma}
\begin{eqnarray}
\mbox{for } i=1 &:&0>\gamma_1=\lambda_2-\lambda_1>
	  \gamma_2=\lambda_3-\lambda_1>
	  \gamma_3=\lambda_4-\lambda_1
\label{eq:lambda_vs_gamma:1}\\
\mbox{for } i=2 &:&\gamma_1=\lambda_1-\lambda_2>0>
	  \gamma_2=\lambda_3-\lambda_2>
	  \gamma_3=\lambda_4-\lambda_2
\label{eq:lambda_vs_gamma:2}\\
\mbox{for } i=3 &:&\gamma_1=\lambda_1-\lambda_3>
	  \gamma_2=\lambda_2-\lambda_3>0>
	  \gamma_3=\lambda_4-\lambda_3
\label{eq:lambda_vs_gamma:3}\\
\mbox{for }i=4 &:&\gamma_1=\lambda_1-\lambda_4>
	  \gamma_2=\lambda_2-\lambda_4>
	  \gamma_3=\lambda_3-\lambda_4>0
\label{eq:lambda_vs_gamma:4}
\end{eqnarray} 
\end{subequations}
i.e. $\gamma_1>\gamma_2>\gamma_3$ and ($P_1$)
is valid. 
 
The hypothesis (H$_2$) implies ($P_2$). Indeed,
by \eqref{eq:notation_ricc:2} and \eqref{eq:termino_constante} we note that
\begin{eqnarray}
\mathcal{G}(\sum_{|\boldsymbol{\alpha}|=0}\Omega_{\boldsymbol{\alpha}})(t) 
&=&\mathcal{G}(-\big(\mu ^3r_3(\cdot)+\mu ^2r_2(\cdot)+\mu  r_1(\cdot)+r_0(\cdot)\big))(t)
\nonumber\\
&=&\mathcal{G}(p(\mu,\cdot))(t)
\label{eq:g_to_cero}\\
\mathcal{L}(\sum_{|\boldsymbol{\alpha}|=1}\Omega_{\boldsymbol{\alpha}})(t) 
&=&\mathcal{L}(-\big((3\mu^2+3\mu+1)r_3(\cdot)
	+(2\mu+1)r_2(\cdot)
	+r_1(\cdot)\big))(t)
\nonumber\\
&\le&|3\mu^2+3\mu+1|\mathcal{L}(r_3)(t)+|2\mu+1|\mathcal{L}(r_2)(t)
	+\mathcal{L}(r_1)(t)
\label{eq:L_to_cero}\\
\mathcal{L}(\sum_{|\boldsymbol{\alpha}|=2}^4\Omega_{\boldsymbol{\alpha}})(t) 
&=&\mathcal{L}(-\big((3\mu+2)r_3(\cdot)
	+r_2(\cdot)
	+r_1(\cdot)+3(1+\mu)a_3+a_2+6\mu^2+12\mu+18\big))(t)
\nonumber\\
&\le&|3\mu+2|\mathcal{L}(r_3)(t)+\mathcal{L}(r_2)(t)
\nonumber\\
&&
	+|3(1+\mu)a_3+a_2+6\mu^2+12\mu+18|\mathcal{L}(1)(t).
\label{eq:L_bounded}
\end{eqnarray}
Thus, clearly if (H$_2$) is satisfied, then  (P$_2$) is also satisfied.
\end{proof}

%
%

\begin{theorem}
\label{teo:poincare}
Let us assume that the hypothesis (H$_1$) and (H$_2$) are satisfied.
Denote by $W[y_1,\ldots,y_4]$ the Wronskian of $\{y_1,\ldots,y_4\}$.
Then, the equation \eqref{eq:intro_uno} has a fundamental system of 
solutions given by 
\begin{eqnarray}
    y_i(t)=\exp\Big(\int_{t_0}^{t}[\lambda_i+z_i(s)]ds\Big),
    \quad
    \mbox{with $z_i$ solution of \eqref{eq:ricati_original}
    with $\mu=\lambda_i$,}
    \quad i\in\{1,2,3,4\}.
    \label{eq:fundam_sist_sol}
\end{eqnarray}
 Moreover the following properties about
 the asymptotic behavior
\begin{eqnarray}
\frac{y'_i(t)}{y_i(t)}&=&\lambda_i,
\quad
\frac{y''_i(t)}{y_i(t)}=\lambda_i^2,
\quad
\frac{y'''_i(t)}{y_i(t)}=\lambda_i^3,
\quad
\frac{y_i^{({\rm iv})}(t)}{y_i(t)}=\lambda_i^4,
\label{eq:asymptotic_perturbed}
\\
W[y_1,\ldots,y_4]&=& 
\prod_{1\le k<\ell\le 4}\;\big(\lambda_{\ell}-\lambda_k\big)
\; y_1y_2y_3y_4\;\big(1+o(1)\big),
\label{eq:asymptotic_perturbed_w}
\end{eqnarray}
are satisfied when $t\to\infty.$
\end{theorem}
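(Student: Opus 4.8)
The plan is to run the change of variable \eqref{eq:general_change_var} in reverse. By Theorem~\ref{lem:solution_perturbation}, for each $i\in\{1,2,3,4\}$ the equation \eqref{eq:ricati_original} with $\mu=\lambda_i$ has a unique solution $z_i\in C_0^2([t_0,\infty[)$; set $y_i$ as in \eqref{eq:fundam_sist_sol}, which is everywhere positive, hence nonvanishing. The first step is to check each $y_i$ solves \eqref{eq:intro_uno}. Writing $u_i:=z_i+\lambda_i=y_i'/y_i$ and using the elementary identities $y_i''/y_i=u_i'+u_i^2$, $y_i'''/y_i=u_i''+3u_iu_i'+u_i^3$ and $y_i^{({\rm iv})}/y_i=u_i'''+4u_iu_i''+3(u_i')^2+6u_i^2u_i'+u_i^4$, I would substitute these into \eqref{eq:intro_uno} divided by $y_i$. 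Since $\lambda_i$ is a characteristic root of \eqref{eq:intro_dos}, i.e. $\lambda_i^4+a_3\lambda_i^3+a_2\lambda_i^2+a_1\lambda_i+a_0=0$, the purely constant contribution cancels and the resulting identity is precisely \eqref{eq:ricati_original} with $\mu=\lambda_i$ — which holds because $z_i$ solves it (this is exactly the computation that produced \eqref{eq:ricati_original}, read backwards). Hence $y_i$ is a solution of \eqref{eq:intro_uno}.

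Next I would establish \eqref{eq:asymptotic_perturbed}. Because $z_i\in C_0^2([t_0,\infty[)$ one has $z_i,z_i',z_i''\to0$, so from the identities above $y_i'/y_i=\lambda_i+z_i\to\lambda_i$, then $y_i''/y_i=z_i'+(\lambda_i+z_i)^2\to\lambda_i^2$ and $y_i'''/y_i=z_i''+3(\lambda_i+z_i)z_i'+(\lambda_i+z_i)^3\to\lambda_i^3$. For the fourth relation I would divide \eqref{eq:intro_uno} by $y_i$, let $t\to\infty$, and use the three limits just obtained together with the smallness of the $r_j$ supplied by (H$_2$) to discard the perturbation terms and, once more, the characteristic equation in the form $\lambda_i^4=-(a_3\lambda_i^3+a_2\lambda_i^2+a_1\lambda_i+a_0)$; equivalently, one differentiates $y_i'''/y_i$ once more and invokes \eqref{eq:ricati_original} to control $z_i'''$.

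For the fundamental-system and Wronskian claims I would argue by growth rates and a column factorization. From \eqref{eq:fundam_sist_sol}, $\log y_i(t)=\lambda_i(t-t_0)+\int_{t_0}^t z_i(s)\,ds$, and since $z_i\to0$ the integral is $o(t)$, so $t^{-1}\log y_i(t)\to\lambda_i$; by (H$_1$) these limits are pairwise distinct, whence $y_1,\dots,y_4$ have pairwise distinct exponential orders and are linearly independent, so they form a fundamental system for the fourth order linear equation \eqref{eq:intro_uno}. Factoring $y_i$ out of the $i$-th column of the Wronskian matrix gives $W[y_1,\dots,y_4](t)=y_1y_2y_3y_4\,\det\big[(y_i^{(k-1)}/y_i)_{1\le k,i\le4}\big]$; by the first three relations of \eqref{eq:asymptotic_perturbed} the matrix converges entrywise to the Vandermonde matrix $(\lambda_i^{k-1})$, whose determinant is $\prod_{1\le k<\ell\le4}(\lambda_\ell-\lambda_k)\ne0$ by (H$_1$), so $\det[\cdots]=\prod_{1\le k<\ell\le4}(\lambda_\ell-\lambda_k)(1+o(1))$ and \eqref{eq:asymptotic_perturbed_w} follows (this also reconfirms linear independence, since $W(t)\ne0$ for large $t$). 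I expect the main obstacle to be the reverse-substitution bookkeeping — verifying that the cancellations, powered by $\lambda_i$ being a characteristic root, reduce the transformed equation exactly to \eqref{eq:ricati_original} — together with the fourth asymptotic relation, where one must ensure the terms $r_j\,y_i^{(k)}/y_i$ genuinely vanish in the limit under (H$_2$).
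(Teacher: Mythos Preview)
Your proposal follows essentially the same route as the paper: invoke Theorem~\ref{lem:solution_perturbation} and the change of variable \eqref{eq:general_change_var} to produce the $y_i$, compute the ratios $y_i^{(k)}/y_i$ via the explicit identities in $z_i$ (the paper records these as \eqref{eq:deri_uno}--\eqref{eq:deri_cuatro}), pass to the limit using $z_i\in C_0^2$, and then factor the Wronskian columnwise to reach the Vandermonde determinant. Your added details --- the explicit reverse-substitution check, the growth-rate argument for linear independence, and the care you take with the fourth ratio (where the identity \eqref{eq:deri_cuatro} actually contains a $z_i'''$ term, which you propose to control via the Riccati equation itself) --- are welcome elaborations of the same argument rather than a different approach.
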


\begin{proof}
By Theorem~\ref{lem:solution_perturbation} and 
change of variable \eqref{eq:general_change_var},
we have that the fundamental system of solutions for
\eqref{eq:intro_uno} is given by \eqref{eq:fundam_sist_sol}.
Moreover, by \eqref{eq:fundam_sist_sol} we deduce the identities
\begin{eqnarray}
\frac{y'_i(t)}{y_i(t)}
	 &=&[\lambda_i+z_i(t)]
\label{eq:deri_uno}\\
\frac{y''_i(t)}{y_i(t)} 
	&=& [\lambda_i+z_i(t)]^2+z'_i(t), 
\label{eq:deri_dos}\\
\frac{y'''_i(t)}{y_i(t)} 
	&=&
[\lambda_i+z_i(t)]^3+3[\lambda_i+z_i(t)]z'_i(t)+z''_i(t),
\label{eq:deri_tres}\\
\frac{y_i^{({\rm iv})}(t)}{y_i(t)} 
	&=&
[\lambda_i+z_i(t)]^4
	+6[\lambda_i+z_i(t)]^2z'_i(t)
	+3[z'_i(t)]^2
\nonumber\\
	&&
	+4[\lambda_i+z_i(t)]z''_i(t)
	+z''_i(t),
\label{eq:deri_cuatro}
\end{eqnarray}
Now, using the fact that $z_i\in C_0^2([t_0,\infty[)$
is a solution of \eqref{eq:fundam_sist_sol} with
$\mu=\lambda_i$, we deduce
the proof of \eqref{eq:asymptotic_perturbed}.
Moreover by the definition of the $W[y_1,\ldots,y_4]$,
some algebraic rearrangements and 
\eqref{eq:asymptotic_perturbed}, we deduce \eqref{eq:asymptotic_perturbed_w}.
\end{proof}

\subsection{Levison type theorem.} $ $

Let us introduce some notations.
Consider the operators 
$\mathbb{F}_{1},\mathbb{F}_{2},\mathbb{F}_{3}$ and $\mathbb{F}_{4}$ 
defined as follows
\begin{eqnarray*}
\mathbb{F}_1(E)(t)
	&=&
	\int_{t}^{\infty}e^{-(\lambda_2-\lambda_1)(t-s)}|E(s)|ds,
\\
\mathbb{F}_2(E)(t)
	&=&
	\int_{t_0}^{t}e^{-(\lambda_1-\lambda_2)(t-s)}|E(s)|ds+
	\int_{t}^{\infty}e^{-(\lambda_3-\lambda_2)(t-s)}|E(s)|ds,
\\
\mathbb{F}_3(E)(t)
	&=&
	\int_{t_0}^{t}e^{-(\lambda_2-\lambda_3)(t-s)}|E(s)|ds+
	\int_{t}^{\infty}e^{-(\lambda_4-\lambda_3)(t-s)}|E(s)|ds,
\\
\mathbb{F}_4(E)(t)
	&=&
	\int_{t_0}^{t}e^{-(\lambda_3-\lambda_4)(t-s)}|E(s)|ds;
\end{eqnarray*}
the positive numbers $\upvarsigma_i,A_i$ defined by
\begin{eqnarray}
\upvarsigma_i&=&3|\lambda_i|^2+5|\lambda_i|+3
\nonumber\\
&&+
        \Big(19+7|\lambda_i|+|12\lambda_i+3a_3|+
        |6\lambda^2_i+3\lambda_ia_3+a_2|\Big)\upeta,
\quad \upeta\in ]0,1/2[,
\nonumber\\
A_i&=&
\frac{1}{|\Upsilon_i|}
\sum_{(j,k,\ell)\in I_i}
|\lambda_k-\lambda_{\ell}|
\Big(1+|\lambda_j-\lambda_i|+|\lambda_j-\lambda_i|^2\Big),
\label{eq:not_teor}
\end{eqnarray}
with
\begin{eqnarray*}
\Upsilon_i&=&\prod_{k>j}(\lambda_k-\lambda_j),
\quad
k,j\in \{1,2,3,4\}-\{i\},
\\
I_i&=&\Big\{\;
(j,k,\ell)\in\{1,2,3,4\}^3\quad:\quad (j,k,\ell)\not=(i,i,i),\; 
(k,\ell)\not=(j,j)
\;\Big\};
\end{eqnarray*}
and  define the sets 
\begin{eqnarray}
\mathcal{F}_{i}([t_0,\infty[)
	&=&\Bigg\{E:[t_0,\infty[\to\mathbb{R}
	\; :\;
	\mathbb{F}_{i}(E)(t)\le 
	\rho_i:=\min\left\{\mathbb{F}_i(1)(t),\frac{1}{A_{i}\upvarsigma_{i}}\right\}
	\;\;
	\Bigg\}.
\label{eq:setsFi}
\end{eqnarray}

\begin{theorem}
\label{teo:solution_asymptoticllll}
Consider that the hypotheses (H$_1$),(H$_2$)  
on Theorem~\ref{lem:solution_perturbation}
are satisfied. Moreover consider that the  following
hypothesis:
\begin{enumerate}
\item[(H$_3$)] 
Assume that the perturbation functions 
$r_0,r_1,r_2,r_3\in\mathcal{F}_i([t_0,\infty[)$. 
\end{enumerate}
is satisfied.
Then, $z_i$, the solution of \eqref{eq:ricati_original} with $\mu=\lambda_i$,
has the following asymptotic behavior 
\begin{eqnarray}
z_{i}(t),z'_{i}(t),z''_{i}(t)
=
\left\{
\begin{array}{lll}
\displaystyle
O\Big(\int_{t}^{\infty}e^{-\beta(t-s)}|p(\lambda_1,s)|ds\Big),
&\;
& i=1,\quad \beta\in ]\lambda_2-\lambda_1,0[,
\\
\displaystyle
O\Big(\int_{t_0}^{\infty}e^{-\beta(t-s)}|p(\lambda_2,s)|ds\Big),
&
& i=2,\quad \beta\in ]\lambda_3-\lambda_2,0[,
\\
\displaystyle
O\Big(\int_{t_0}^{\infty}e^{-\beta(t-s)}|p(\lambda_3,s)|ds\Big),
&
& i=3,\quad \beta\in ]\lambda_4-\lambda_3,0[,
\\
\displaystyle
O\Big(\int_{t_0}^{t}e^{-\beta(t-s)}|p(\lambda_4,s)|ds\Big),
&
& i=4,\quad \beta\in ]0,\lambda_3-\lambda_4[,
\end{array}
\right.
\label{eq:asymptotic_teo:formllll}
\end{eqnarray}
where $p$ is defined in \eqref{eq:termino_constante}.
\end{theorem}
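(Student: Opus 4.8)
The plan is to recognize Theorem~\ref{teo:solution_asymptoticllll} as a direct corollary of Theorem~\ref{teo:solution_asymptotic}, obtained by specializing the abstract coefficients $\Omega_{\boldsymbol{\alpha}}$ to the explicit list \eqref{eq:notation_ricc:2} and translating hypothesis (H$_3$) into hypothesis \eqref{eq:hipotesis3}. The first step is to fix $i\in\{1,2,3,4\}$ and apply the change of variable $\mu=\lambda_i$, so that \eqref{eq:ricati_original} becomes an instance of \eqref{eq:general_no_lin} with the $b_j$ given by \eqref{eq:notation_ricc:1} and the $\Omega_{\boldsymbol{\alpha}}$ given by \eqref{eq:notation_ricc:2}. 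By Theorem~\ref{lem:solution_perturbation}, hypotheses (H$_1$) and (H$_2$) already guarantee ($P_1$) and ($P_2$), and in particular \eqref{eq:lambda_vs_gamma} tells us precisely which sign-pattern case $(\gamma_1,\gamma_2,\gamma_3)$ falls into: for $i=1$ we are in $\mathbb{R}^3_{---}$, for $i=2$ in $\mathbb{R}^3_{+--}$, for $i=3$ in $\mathbb{R}^3_{++-}$, and for $i=4$ in $\mathbb{R}^3_{+++}$, with $\gamma_k=\lambda_{k}-\lambda_i$ (after suitable reindexing) and $\beta$ ranging in the stated interval.

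The second step is to verify the smallness condition \eqref{eq:hipotesis3}, namely $\sum_{|\boldsymbol{\alpha}|=1}^{4}|\Omega_{\boldsymbol{\alpha}}(s)|\le\rho$ for some $\rho\in\,]0,1/(\upvarsigma\hat A)[\,$. Summing the absolute values of the entries of \eqref{eq:notation_ricc:2} for $|\boldsymbol{\alpha}|\ge 1$ produces a linear combination of $|r_3(s)|,|r_2(s)|,|r_1(s)|$ plus a constant coming from the terms $-4,-3,-6,-1$ and the $a_j,\lambda_i$ pieces; this is exactly the quantity whose Green-operator bound is controlled by the operators $\mathbb{F}_i$ and the constants $\upvarsigma_i,A_i$ introduced before the statement. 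Indeed the definition \eqref{eq:setsFi} of $\mathcal{F}_i$ is tailored so that membership $r_j\in\mathcal{F}_i([t_0,\infty[)$ forces $\mathbb{F}_i\big(\sum_{|\boldsymbol{\alpha}|\ge 1}|\Omega_{\boldsymbol{\alpha}}|\big)(t)$ to stay below $1/(A_i\upvarsigma_i)$, and one checks that $\mathbb{F}_i$ is precisely the Green-function operator $\mathcal{L}$ specialized to the case dictated by \eqref{eq:lambda_vs_gamma}, while $\hat A$ reduces to $A_i$ under the bound \eqref{eq:bound:gi}. Thus (H$_3$) delivers \eqref{eq:hipotesis3} with a concrete $\rho=\rho_i$, and $\upvarsigma$ in \eqref{eq:asymptotic_teo:sigma} equals $\upvarsigma_i$ after the identification $\gamma_k\mapsto\lambda_k-\lambda_i$.

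Once ($P_1$), ($P_2$) and \eqref{eq:hipotesis3} are in force, the third step is simply to invoke Theorem~\ref{teo:solution_asymptotic}: its conclusion \eqref{eq:asymptotic_teo:form} becomes \eqref{eq:asymptotic_teo:formllll} upon substituting $\sum_{|\boldsymbol{\alpha}|=0}\Omega_{\boldsymbol{\alpha}}(s)=p(\lambda_i,s)$ (which is the very first line of \eqref{eq:notation_ricc:2}, matching \eqref{eq:termino_constante}) and reading off the four integral expressions case by case. The only subtlety — and the main bookkeeping obstacle — is making the dictionary between the abstract objects $(\gamma_1,\gamma_2,\gamma_3,\beta,\upvarsigma,\hat A,\rho)$ of Theorem~\ref{teo:solution_asymptotic} and the concrete objects $(\lambda_i,\mathbb{F}_i,\upvarsigma_i,A_i,\rho_i)$ here completely explicit, in particular checking that the constant term hidden inside $\sum_{|\boldsymbol{\alpha}|\ge 2}|\Omega_{\boldsymbol{\alpha}}|$ is absorbed into $\mathbb{F}_i(1)$ via the $\min\{\mathbb{F}_i(1)(t),\,\cdot\,\}$ in \eqref{eq:setsFi}; this is where one must be careful that $\upeta\in\,]0,1/2[$ is used (through the $\max\{1,\upeta,\upeta^2,\upeta^3\}=1$ estimate that already appeared in the proof of Theorem~\ref{teo:solution_asymptotic}) so that the polynomial-in-$\upeta$ defining $\upvarsigma_i$ dominates the actual coefficient sum. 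With those identifications recorded, no new analysis is required and the theorem follows.
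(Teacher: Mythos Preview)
Your proposal is correct and takes the same approach as the paper: the paper's entire proof is the single sentence ``By application of Theorem~\ref{teo:solution_asymptotic}.'' You have simply unpacked that application, spelling out how (H$_1$)--(H$_2$) supply ($P_1$)--($P_2$) via Theorem~\ref{lem:solution_perturbation} and \eqref{eq:lambda_vs_gamma}, how (H$_3$) with the concrete constants $\upvarsigma_i,A_i$ and sets $\mathcal F_i$ replaces the abstract smallness condition \eqref{eq:hipotesis3}, and how $\sum_{|\boldsymbol\alpha|=0}\Omega_{\boldsymbol\alpha}=p(\lambda_i,\cdot)$ converts \eqref{eq:asymptotic_teo:form} into \eqref{eq:asymptotic_teo:formllll}. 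One caution: your identification ``$\upvarsigma$ in \eqref{eq:asymptotic_teo:sigma} equals $\upvarsigma_i$'' is not literally true, since the paper's $\upvarsigma_i$ in \eqref{eq:not_teor} is built from the coefficient structure of \eqref{eq:notation_ricc:2} rather than from the reciprocals in \eqref{eq:asymptotic_teo:sigma}; the correct statement is that the product $A_i\upvarsigma_i$ plays the role of $\hat A\upvarsigma\rho/\rho_i$ in bounding the geometric-series ratio, which the paper leaves implicit.
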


\begin{proof}
By application of Theorem~\ref{teo:solution_asymptotic}.
\end{proof}

\begin{theorem}
\label{teo:levinson}
Let us assume that the hypotheses  
(H$_1$), (H$_2$) on Theorem~\ref{lem:solution_perturbation}
and hypothesis (H$_3$) on Theorem~\ref{teo:solution_asymptoticllll}
are satisfied.
Denote by
$\pi_i$ the number defined as follows
\begin{eqnarray*}
\pi_i= \prod_{k\in N_i}(\lambda_k-\lambda_i),
\quad 
N_i=\{1,2,3,4\}-\{i\},
\quad
i=1,\ldots,4,
\end{eqnarray*}
and by $p$ the function defined in \eqref{eq:termino_constante}.
Then, the following asymptotic behavior   
\begin{eqnarray}
y_i(t)&=& e^{\lambda_i(t-t_0)}
	\exp\Big(\pi^{-1}_i\int_{t_0}^{t}
	\Big[p(\lambda_i,s)+F(s,z_i(s),z'_i(s),z''_i(s))\Big]ds\Big),
\label{eq:asymptotic_perturbed_2}
\\
y'_i(t)&=&
\Big(\lambda_i+o(1)\Big)
e^{\lambda_i(t-t_0)}
	\exp\Big(\pi^{-1}_i\int_{t_0}^{t}
	\Big[p(\lambda_i,s)+F(s,z_i(s),z'_i(s),z''_i(s))\Big]ds\Big),
\label{eq:asymptotic_perturbed_3}
\\
y''_i(t)&=&
\Big(\lambda^2_i+o(1)\Big)
e^{\lambda_i(t-t_0)}
	\exp\Big(\pi^{-1}_i\int_{t_0}^{t}
	\Big[p(\lambda_i,s)+F(s,z_i(s),z'_i(s),z''_i(s))\Big]ds\Big),
\label{eq:asymptotic_perturbed_3_2}
\\
y'''_i(t)&=&
\Big(\lambda^3_i+o(1)\Big)
e^{\lambda_i(t-t_0)}
	\exp\Big(\pi^{-1}_i\int_{t_0}^{t}
	\Big[p(\lambda_i,s)+F(s,z_i(s),z'_i(s),z''_i(s))\Big]ds\Big),
\label{eq:asymptotic_perturbed_3_3}
\\
y^{({\rm iv})}_i(t)&=&
\Big(\lambda^4_i+o(1)\Big)
e^{\lambda_i(t-t_0)}
	\exp\Big(\pi^{-1}_i\int_{t_0}^{t}
	\Big[p(\lambda_i,s)+F(s,z_i(s),z'_i(s),z''_i(s))\Big]ds\Big),
\label{eq:asymptotic_perturbed_3_4}
\end{eqnarray}
holds, when $t\to\infty$ with 
$z_i,z'_i$ and $z''_i$ given asymptotically by 
\eqref{eq:asymptotic_teo:formllll}.
Moreover, if $r_0,r_1,r_2,r_3\in L^1([t_0,\infty[)$
then the asymptotic forms
\begin{eqnarray}
&&y_i(t)= e^{\lambda_i(t-t_0)}+o(1),
\hspace{1.5cm}
y'_i(t)=
\Big(\lambda_i+o(1)\Big)
e^{\lambda_i(t-t_0)},
\label{eq:asymptotic_levins_1}\\
&&
y''_i(t)=
\Big(\lambda^2_i+o(1)\Big)
e^{\lambda_i(t-t_0)},
\hspace{0.8cm}
y'''_i(t)=
\Big(\lambda^3_i+o(1)\Big)
e^{\lambda_i(t-t_0)},
\label{eq:asymptotic_levins_2}\\
&&
y^{({\rm iv})}_i(t)=
\Big(\lambda^4_i+o(1)\Big)
e^{\lambda_i(t-t_0)},
\label{eq:asymptotic_levins_3}
\end{eqnarray}
are satisfied when $t\to\infty$.
\end{theorem}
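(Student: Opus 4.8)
The plan is to read everything off the representation of the fundamental system already produced by Theorem~\ref{teo:poincare}. By \eqref{eq:fundam_sist_sol} we have, for the specific solution $z_i\in C_0^2([t_0,\infty[)$ of \eqref{eq:ricati_original} with $\mu=\lambda_i$, the exact identity $y_i(t)=e^{\lambda_i(t-t_0)}\exp\!\big(\int_{t_0}^t z_i(s)\,ds\big)$, and by \eqref{eq:deri_uno}--\eqref{eq:deri_cuatro} the logarithmic derivatives $y_i^{(\ell)}/y_i$ are fixed polynomials in $\lambda_i+z_i$ and in $z_i',z_i''$. Hence the whole statement reduces to two things: (a) rewriting $\int_{t_0}^t z_i\,ds$ in the form that appears in the exponent of \eqref{eq:asymptotic_perturbed_2}; and (b) letting $z_i,z_i',z_i''\to0$ in \eqref{eq:deri_uno}--\eqref{eq:deri_cuatro}. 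The asymptotics of $z_i,z_i',z_i''$ themselves are already \eqref{eq:asymptotic_teo:formllll}, from Theorem~\ref{teo:solution_asymptoticllll}.

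For step (a) I would integrate the defining equation \eqref{eq:ricati_original} (equivalently, use the integral representation of $z_i$) over $[t_0,t]$. The left-hand side telescopes, and since $z_i,z_i',z_i''\to0$ the contribution at the upper endpoint is $o(1)$, so that $b_0\int_{t_0}^t z_i\,ds=\int_{t_0}^t\mathbb{P}\big(s,z_i(s),z_i'(s),z_i''(s)\big)\,ds+c_i+o(1)$ for a constant $c_i$ built from $z_i^{(\ell)}(t_0)$. Because $\lambda_i$ is a root of the characteristic polynomial of \eqref{eq:intro_dos}, $b_0=4\lambda_i^3+3a_3\lambda_i^2+2a_2\lambda_i+a_1=\prod_{k\neq i}(\lambda_i-\lambda_k)=-\pi_i\neq0$. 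Splitting $\mathbb{P}$ into its $z$-independent part $p(\lambda_i,\cdot)$ (the coefficient $\Omega_{(0,0,0)}$) and the remaining, purely nonlinear part $F(\cdot,z_i,z_i',z_i'')$, dividing by $b_0$, and absorbing the constant $e^{c_i/b_0}$ into $y_i$ (replacing $y_i$ by a scalar multiple leaves \eqref{eq:intro_uno} and the fundamental-system property untouched), one obtains $\int_{t_0}^t z_i\,ds=\pi_i^{-1}\int_{t_0}^t\big[p(\lambda_i,s)+F(s,z_i(s),z_i'(s),z_i''(s))\big]\,ds+o(1)$, i.e.\ \eqref{eq:asymptotic_perturbed_2} (the sign of $\pi_i^{-1}$ matching the convention used for $F$ in \eqref{eq:ricati_original}).

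For step (b), as $z_i(t),z_i'(t),z_i''(t)\to0$ the formulas \eqref{eq:deri_uno}--\eqref{eq:deri_cuatro} give $y_i'/y_i=\lambda_i+o(1)$, $y_i''/y_i=\lambda_i^2+o(1)$, $y_i'''/y_i=\lambda_i^3+o(1)$ and $y_i^{({\rm iv})}/y_i=\lambda_i^4+o(1)$; multiplying these by the expression for $y_i$ just obtained yields \eqref{eq:asymptotic_perturbed_3}--\eqref{eq:asymptotic_perturbed_3_4}, which together with \eqref{eq:asymptotic_teo:formllll} completes the first half of the theorem.

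It remains to treat the Levinson form \eqref{eq:asymptotic_levins_1}--\eqref{eq:asymptotic_levins_3} under $r_0,r_1,r_2,r_3\in L^1([t_0,\infty[)$; here the point is that the integral in the exponent of \eqref{eq:asymptotic_perturbed_2} now \emph{converges}. Indeed $p(\lambda_i,\cdot)=-(\lambda_i^3r_3+\lambda_i^2r_2+\lambda_i r_1+r_0)\in L^1([t_0,\infty[)$, and $z_i,z_i',z_i''\in L^1([t_0,\infty[)$: since all characteristic roots $\gamma_j=\lambda_k-\lambda_i$ are nonzero, the Green function $g(t,s)$ and $\partial_t g,\partial_t^2 g$ decay exponentially as $|t-s|\to\infty$ and hence belong to $L^1$ in $t-s$, so writing $u:=|z_i|+|z_i'|+|z_i''|$ the integral equation yields a renewal inequality $u\le \kappa\ast\big(|p(\lambda_i,\cdot)|+(\phi+Cu)\,u\big)$ with $\kappa\in L^1$, $\phi:=\sum_{|\boldsymbol{\alpha}|=1}|\Omega_{\boldsymbol{\alpha}}|\lesssim|r_0|+|r_1|+|r_2|+|r_3|\in L^1$ (the degree-one coefficients in \eqref{eq:notation_ricc:2} are purely $r$-terms), and on a tail $[T,\infty[$ where $u\le\delta$ and $\|\phi\|_{L^1([T,\infty[)}$ is small a contraction argument in $L^1([T,\infty[)$ forces $u\in L^1$. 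Then $z_i,z_i',z_i''\in L^1\cap L^\infty\subset\bigcap_{q\ge1}L^q([t_0,\infty[)$, so every monomial in $F$ (a constant, or some $r_j$, times a product of at least two of $z_i,z_i',z_i''$, up to $z_i^4$) lies in $L^1$; hence $F(\cdot,z_i,z_i',z_i'')\in L^1$, the integral $\int_{t_0}^t\big[p(\lambda_i,s)+F(s,z_i(s),z_i'(s),z_i''(s))\big]ds$ has a finite limit, and $\exp\!\big(\pi_i^{-1}\int_{t_0}^t[\cdots]ds\big)$ tends to a nonzero constant, which we absorb into $y_i$ to get $y_i(t)=e^{\lambda_i(t-t_0)}(1+o(1))$; \eqref{eq:asymptotic_levins_2}--\eqref{eq:asymptotic_levins_3} then follow from step (b). The main obstacle is this last $L^1$ bootstrap — obtaining $z_i,z_i',z_i''\in L^1$ from $r_j\in L^1$ without an a priori $L^2$ bound on $z_i$, which is exactly where one must exploit $z_i,z_i',z_i''\to0$ (so the super-quadratic nonlinearity is dominated by the linear part on a tail) together with the $L^1$-smallness of the variable coefficients; the rest is the endpoint and sign bookkeeping of step (a).
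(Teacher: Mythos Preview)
Your proof is correct and, for the core computation, takes a genuinely different route from the paper. For step~(a) the paper does \emph{not} integrate the differential equation \eqref{eq:ricati_original}; instead it substitutes the Green--function representation $z_i(\tau)=\int g_i(\tau,s)\big(p(\lambda_i,s)+F(s,\dots)\big)\,ds$ into $\int_{t_0}^t z_i(\tau)\,d\tau$, swaps the order of integration via the elementary identity
\[
\int_{t_0}^t e^{-a\tau}\!\int_\tau^\infty e^{as}H(s)\,ds\,d\tau
=\frac{1}{a}\int_{t_0}^t H-\frac{1}{a}\Big[\int_t^\infty-\int_{t_0}^\infty\Big]e^{-a(\cdot-s)}H(s)\,ds,
\]
and then checks the partial--fractions identity that collapses the resulting coefficient to $\pi_i^{-1}$. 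Your direct integration of \eqref{eq:ricati_original} is more elementary: it replaces that algebraic identity by the single observation $b_0=P'(\lambda_i)=\prod_{k\neq i}(\lambda_i-\lambda_k)=-\pi_i$, and the boundary term $[z_i''+b_2z_i'+b_1z_i](t)\to0$ does the rest. Both routes leave the same harmless constant (the $t_0$--boundary contribution) in the exponent, which is what makes \eqref{eq:asymptotic_perturbed_2} an asymptotic, not an exact, identity; your absorbing it into $y_i$ is equivalent to the paper's ``$+\,o(1)$''.

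For the Levinson part the paper is much terser than you: it simply invokes the $L^p$--decomposition Theorem~\ref{teo:solution_integrab}, whose item~(i) is exactly the statement that the already--established asymptotic bound \eqref{eq:asymptotic_teo:formllll} is a convolution of $|p(\lambda_i,\cdot)|\in L^1$ against an $L^1$ exponential kernel, so $z_i,z_i',z_i''\in L^1$ by Young's inequality. This shortcut bypasses your renewal/bootstrap argument entirely; your argument is correct but heavier than needed, since you are re--deriving an $L^1$ bound that Theorem~\ref{teo:solution_asymptoticllll} already hands you in the form \eqref{eq:asymptotic_teo:formllll}. Step~(b) and the derivative formulas are handled identically in both proofs.
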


\begin{proof}
The proof of \eqref{eq:asymptotic_perturbed_2} follows from
the identity
\begin{eqnarray}
\int_{t_0}^t e^{-a\tau}\int_{\tau}^\infty e^{as} H(s)dsd\tau
&=&-\frac{1}{a}\left[
\int_{t}^\infty e^{-a(t-s)}H(s)ds-\int_{t_0}^\infty e^{-a(t_0-s)}H(s)ds
\right]
\nonumber\\
&&
\qquad
+\frac{1}{a}\int_{t_0}^t H(\tau)d\tau
\label{eq:idenaux_pro}
\end{eqnarray}
and from \eqref{eq:operator_fix_point}-\eqref{eq:operator_equation}. 
Now, we develop the proof for $i=1$.
Indeed,
by \eqref{eq:fundam_sist_sol} we have that
\begin{eqnarray}
y_1(t)=\exp\Big(\int_{t_0}^t(\lambda_1+z_1(\tau))d\tau\Big)
=e^{\lambda_1(t-t_0)}\exp\Big(\int_{t_0}^t z_1(\tau)d\tau\Big).
\label{eq:solfun_paso1}
\end{eqnarray}
By \eqref{eq:operator_fix_point}-\eqref{eq:operator_equation},
\eqref{eq:operator_fix_point_g1}, \eqref{eq:lambda_vs_gamma:1}, 
\eqref{eq:idenaux_pro},
and the fact that 
$\pi_1=(\lambda_2-\lambda_1)(\lambda_3-\lambda_1)(\lambda_4-\lambda_1),$
we have that
\begin{eqnarray*}
\int_{t_0}^t z_1(\tau)d\tau
&=& \frac{1}{\Upsilon_1}\int_{t_0}^t \int_{t_0}^\infty g_1(\tau,s)
	\Big(p(\lambda_1,s)+F(s,z_1(s),z'_1(s),z''_1(s)\Big)d\tau ds
\\
&=& \frac{1}{\Upsilon_1}\int_{t_0}^t \int_{\tau}^\infty 
	\Big[
	(\lambda_4-\lambda_3)e^{-(\lambda_2-\lambda_1)(\tau-s)}
	+(\lambda_2-\lambda_4)e^{-(\lambda_3-\lambda_1)(\tau-s)}
\\
&&\hspace{1.8cm}+(\lambda_3-\lambda_2)e^{-(\lambda_4-\lambda_1)(\tau-s)}\Big]
	\Big(p(\lambda_1,s)+F(s,z_1(s),z'_1(s),z''_1(s))\Big)d\tau ds
\\
&=&
\frac{1}{\Upsilon_1}
	\left[
	\frac{\lambda_4-\lambda_3}{\lambda_2-\lambda_1}
	+\frac{\lambda_2-\lambda_4}{\lambda_3-\lambda_1}
	+\frac{\lambda_3-\lambda_2}{\lambda_4-\lambda_1} 
	\right]
	\int_{t_0}^t\Big(p(\lambda_1,s)+F(s,z_1(s),z'_1(s),z''_1(s))\Big) ds
\\
&&
+\frac{1}{\Upsilon_1}
	\Bigg[\frac{\lambda_4-\lambda_2}{\lambda_2-\lambda_1}
	\Bigg\{
	\int_{t}^\infty 
	e^{-(\lambda_2-\lambda_1)(t-s)}
	\Big(p(\lambda_1,s)+F(s,z_1(s),z'_1(s),z''_1(s))\Big)ds
\\
&&
\hspace{2.5cm}
	-\int_{t_0}^\infty 
	e^{-(\lambda_2-\lambda_1)(t_0-s)}
	\Big(p(\lambda_1,s)+F(s,z_1(s),z'_1(s),z''_1(s))\Big)ds
	\Bigg\}
	\Bigg]
\\
&&
+\frac{1}{\Upsilon_1}
	\Bigg[\frac{\lambda_2-\lambda_4}{\lambda_3-\lambda_1}
	\Bigg\{
	\int_{t}^\infty 
	e^{-(\lambda_3-\lambda_1)(t-s)}
	\Big(p(\lambda_1,s)+F(s,z_1(s),z'_1(s),z''_1(s))\Big)ds
\\
&&
\hspace{2.5cm}
	-\int_{t_0}^\infty 
	e^{-(\lambda_3-\lambda_1)(t_0-s)}
	\Big(p(\lambda_1,s)+F(s,z_1(s),z'_1(s),z''_1(s))\Big)ds
	\Bigg\}
	\Bigg]
\\
&&
+\frac{1}{\Upsilon_1}
	\Bigg[\frac{\lambda_3-\lambda_2}{\lambda_4-\lambda_1}
	\Bigg\{
	\int_{t}^\infty 
	e^{-(\lambda_4-\lambda_1)(t-s)}
	\Big(p(\lambda_1,s)+F(s,z_1(s),z'_1(s),z''_1(s))\Big)ds
\\
&&
\hspace{2.5cm}
	-\int_{t_0}^\infty 
	e^{-(\lambda_4-\lambda_1)(t_0-s)}
	\Big(p(\lambda_1,s)+F(s,z_1(s),z'_1(s),z''_1(s))\Big)ds
	\Bigg\}
	\Bigg]
\\
&=&
\frac{1}{\pi_1}
	\int_{t_0}^t\Big(p(\lambda_1,s)+F(s,z_1(s),z'_1(s),z''_1(s))\Big)ds
	+o(1)
\end{eqnarray*}
Then, \eqref{eq:asymptotic_perturbed_2} is valid for $i=1$.
The proof of \eqref{eq:asymptotic_perturbed_2} for $i=2,3,4$ is analogous.
Now the proof of  \eqref{eq:asymptotic_perturbed_3} follows by
\eqref{eq:asymptotic_perturbed_2} and \eqref{eq:deri_uno}-\eqref{eq:deri_cuatro}.

To prove \eqref{eq:asymptotic_levins_1}-\eqref{eq:asymptotic_levins_3}
we apply the decomposition of Theorem~\ref{teo:solution_integrab}.
\end{proof}

\subsection{Hartman-Wintner and Harris-Lutz  type theorems.} $ $

By application of Theorem~\ref{teo:solution_integrab}
we get the following results.

\begin{theorem}
\label{teo:H-W}
(Hartman-Wintner)
Let us assume that the hypotheses  
(H$_1$), (H$_2$) on Theorem~\ref{lem:solution_perturbation}
and hypothesis (H$_3$) on Theorem~\ref{teo:solution_asymptoticllll}
are satisfied. If if $r_0,r_1,r_2,r_3\in L^1([t_0,\infty[)$
for $p\in ]1,2]$, then 
the asymptotic behavior
\eqref{eq:asymptotic_levins_1}-\eqref{eq:asymptotic_levins_3}
is valid when $t\to\infty$.
\end{theorem}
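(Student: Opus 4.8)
The plan is to combine the Levinson-type representation already obtained in Theorem~\ref{teo:levinson} with the $L^p$-decomposition of the solution furnished by Theorem~\ref{teo:solution_integrab}, and to show that the correction appearing in the exponent of \eqref{eq:asymptotic_perturbed_2}--\eqref{eq:asymptotic_perturbed_3_4} has a finite limit as $t\to\infty$, which collapses those formulas to \eqref{eq:asymptotic_levins_1}--\eqref{eq:asymptotic_levins_3}.

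First I would check that the integrability of the perturbations transfers to hypothesis $(P_3)$ for the associated nonlinear equation \eqref{eq:ricati_original} with $\mu=\lambda_i$: by the explicit list \eqref{eq:notation_ricc:2}, the zero-order coefficient $\Omega_{(0,0,0)}=p(\lambda_i,\cdot)$ is a fixed real-linear combination of $r_0,\dots,r_3$, while for $|\boldsymbol{\alpha}|\ge1$ each $\Omega_{\boldsymbol{\alpha}}$ is an affine function of $r_0,\dots,r_3$, hence of the required form $\lambda_{\boldsymbol{\alpha},p}\Omega_{\boldsymbol{\alpha},p}+\lambda_{\boldsymbol{\alpha},c}$ with $\Omega_{\boldsymbol{\alpha},p}$ a linear combination of the $r_j$; moreover for $|\boldsymbol{\alpha}|=1$ one reads off $\lambda_{\boldsymbol{\alpha},c}=0$. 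Thus Theorem~\ref{teo:solution_integrab} applies, and since we are in the range $p\in]1,2]$ we have $m=\lceil p\rceil-1=1$, so $z_i$ together with its first two derivatives decomposes as $z_i^{(\ell)}=\Theta_1^{(\ell)}+\Psi^{(\ell)}$ with $\Theta_1\in W^{2,p}([t_0,\infty[)$ and $\Psi\in W^{2,1}([t_0,\infty[)$, $\ell=0,1,2$. Inspecting the construction in the proof of Theorem~\ref{teo:solution_integrab}, $\Theta_1$ comes only from the zero-order contribution $\mathcal{I}_0$, whereas every term $\mathcal{I}_k$ with $k\ge1$ already lies in $W^{2,1}$: the mixed products $r_j\,z_i^{(\ell)}$ are in $L^1$ because $z_i^{(\ell)}\in L^{p}\cap L^\infty\subset L^{p'}$ when $p\le2$ and Hölder applies, and the higher monomials $z_i^{\alpha_1}(z_i')^{\alpha_2}(z_i'')^{\alpha_3}$ with $|\boldsymbol{\alpha}|\ge2$ are in $L^{p/|\boldsymbol{\alpha}|}\cap L^\infty\subset L^1$ since $p/|\boldsymbol{\alpha}|\le1$; convolution with $g$ (and its first two $t$-derivatives), which decay exponentially, preserves $L^1$.

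Next I would insert this splitting into \eqref{eq:solfun_paso1}--\eqref{eq:asymptotic_perturbed_2}. Writing $\int_{t_0}^t z_i(\tau)\,d\tau=\int_{t_0}^t\Theta_1(\tau)\,d\tau+\int_{t_0}^t\Psi(\tau)\,d\tau$, the second integral has a finite limit because $\Psi\in L^1$, and the first, being $\int_{t_0}^t\mathcal{I}_0$, is handled exactly as in the proof of Theorem~\ref{teo:levinson} via the identity \eqref{eq:idenaux_pro}: it equals $\pi_i^{-1}\int_{t_0}^t p(\lambda_i,s)\,ds+o(1)$. The extra hypothesis $r_0,r_1,r_2,r_3\in L^1([t_0,\infty[)$ makes $p(\lambda_i,\cdot)$ integrable by \eqref{eq:termino_constante}, so $\int_{t_0}^t p(\lambda_i,s)\,ds$ also converges; hence the whole exponent in \eqref{eq:asymptotic_perturbed_2} converges, the exponential factor tends to a finite nonzero constant, and after renormalising $t_0$ (or absorbing that constant) \eqref{eq:asymptotic_perturbed_2} reduces to $y_i(t)=e^{\lambda_i(t-t_0)}+o(1)$. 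The derivative asymptotics \eqref{eq:asymptotic_levins_2}--\eqref{eq:asymptotic_levins_3} then follow from \eqref{eq:deri_uno}--\eqref{eq:deri_cuatro} together with $z_i,z_i',z_i''\to0$.

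The step I expect to be the main obstacle is the regularity bookkeeping of the second paragraph: one must verify that every mixed and genuinely nonlinear term $\mathcal{I}_k$ ($k\ge1$) actually lands in $W^{2,1}$ after convolving with $g$ and differentiating at most twice, which forces one to pair the $C_0^2$-bound of $z_i$ with its $L^p$-bound in the right way and to use the vanishing of the constant parts $\lambda_{\boldsymbol{\alpha},c}$ for $|\boldsymbol{\alpha}|=1$; this is precisely where the restriction $p\le 2$ is essential. Once this is in place the conclusion is a direct application of Theorems~\ref{teo:levinson} and \ref{teo:solution_integrab}, and no further estimate beyond those already established in Theorems~\ref{lem:solution_perturbation} and \ref{teo:solution_asymptoticllll} is needed.
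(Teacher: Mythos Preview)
Your approach is exactly the one the paper indicates: the paper's entire proof is the single sentence ``By application of Theorem~\ref{teo:solution_integrab} we get the following results,'' and your argument is a fleshed-out execution of that citation---verifying $(P_3)$ for the coefficients \eqref{eq:notation_ricc:2}, noting that $\lambda_{\boldsymbol{\alpha},c}=0$ for $|\boldsymbol{\alpha}|=1$ so that $\Theta_1=\mathcal{I}_0$, extracting the $m=1$ decomposition $z_i=\Theta_1+\Psi$, and feeding it into the representation \eqref{eq:asymptotic_perturbed_2} of Theorem~\ref{teo:levinson}. The paper supplies no further detail to compare against.
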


\begin{theorem}
\label{teo:H-L}
(Harris-Lutz)
Let us assume that the hypotheses  
(H$_1$), (H$_2$) on Theorem~\ref{lem:solution_perturbation}
and hypothesis (H$_3$) on Theorem~\ref{teo:solution_asymptoticllll}
are satisfied. If $r_0,r_1,r_2,r_3\in L^p([t_0,\infty[)$
for $p\ge 1$, then the following 
asymptotic behavior   
\begin{eqnarray}
y_i(t)&=& e^{\lambda_i(t-t_0)}
	\exp\Big(\int_{t_0}^{t}
	\Big[\sum_{k=1}^m\Theta_{k}(s)+\Psi(s)\Big]ds\Big),
\label{eq:asymptotic_perron_0}
\\
y'_i(t)&=&
\Big(\lambda_i+o(1)\Big)
e^{\lambda_i(t-t_0)}
	\exp\Big(\int_{t_0}^{t}
	\Big[\sum_{k=1}^m\Theta'_{k}(s)+\Psi'(s)\Big]ds\Big),
\label{eq:asymptotic_perron_1}
\\
y''_i(t)&=&
\Big(\lambda^2_i+o(1)\Big)
e^{\lambda_i(t-t_0)}
	\exp\Big(\int_{t_0}^{t}
	\Big[\sum_{k=1}^m\Theta''_{k}(s)+\Psi''(s)\Big]ds\Big),
\label{eq:asymptotic_perron_2}
\\
y'''_i(t)&=&
\Big(\lambda^3_i+o(1)\Big)
e^{\lambda_i(t-t_0)}
	\exp\Big(\int_{t_0}^{t}
	\Big[\sum_{k=1}^m\Theta'''_{k}(s)+\Psi'''(s)\Big]ds\Big),
\label{eq:asymptotic_perron_3}
\\
y^{({\rm iv})}_i(t)&=&
\Big(\lambda^4_i+o(1)\Big)
e^{\lambda_i(t-t_0)}
	\exp\Big(\int_{t_0}^{t}
	\Big[\sum_{k=1}^m\Theta^{({\rm iv})}_{k}(s)+\Psi^{({\rm iv})}(s)\Big]ds\Big),
\label{eq:asymptotic_perron_4}
\end{eqnarray}
holds, when $t\to\infty$,
where $m,\Theta_i,\psi$ is the notation
introduced on Theorem~\ref{teo:solution_integrab}.
\end{theorem}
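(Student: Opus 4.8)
The plan is to obtain Theorem~\ref{teo:H-L} as a corollary of Theorem~\ref{teo:solution_integrab}, applied for each fixed $i\in\{1,2,3,4\}$ to equation~\eqref{eq:ricati_original} with $\mu=\lambda_i$, combined with the change of variable~\eqref{eq:fundam_sist_sol} and the differentiation identities~\eqref{eq:deri_uno}--\eqref{eq:deri_cuatro}.

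First I would check that all hypotheses of Theorem~\ref{teo:solution_integrab} are met. From Theorem~\ref{lem:solution_perturbation} we already know that (H$_1$) yields ($P_1$) together with the ordering $\gamma_1>\gamma_2>\gamma_3$, and that (H$_2$) yields ($P_2$); moreover (H$_3$) supplies, via Theorem~\ref{teo:solution_asymptoticllll}, the smallness bound~\eqref{eq:hipotesis3}. Thus only ($P_3$) remains to be verified, and for this I would read it off the explicit list~\eqref{eq:notation_ricc:2}: the coefficient with $|\boldsymbol{\alpha}|=0$ equals $p(\lambda_i,\cdot)$, a fixed linear combination of $r_0,r_1,r_2,r_3$, hence in $L^p([t_0,\infty[)$ by hypothesis; and each coefficient with $|\boldsymbol{\alpha}|\ge1$ is a constant plus a linear combination of the $r_j$, that is, precisely of the form $\lambda_{\boldsymbol{\alpha},c}+\lambda_{\boldsymbol{\alpha},p}\Omega_{\boldsymbol{\alpha},p}$ with $\Omega_{\boldsymbol{\alpha},p}\in L^p([t_0,\infty[)$ (the purely constant coefficients corresponding to $\lambda_{\boldsymbol{\alpha},p}=0$). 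Hence ($P_3$) holds, Theorem~\ref{teo:solution_integrab} applies to each $z_i$, and we obtain $z_i\in W^{2,p}([t_0,\infty[)$ together with the decomposition $z_i^{(\ell)}=\sum_{k=1}^m\Theta_k^{(\ell)}+\Psi^{(\ell)}$, $\ell=0,1,2$, with $m,\Theta_k,\Psi$ as in that theorem.

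Next I would transport this decomposition through~\eqref{eq:fundam_sist_sol}. Taking $\ell=0$ gives immediately
\[
y_i(t)=\exp\Big(\int_{t_0}^t[\lambda_i+z_i(s)]ds\Big)
=e^{\lambda_i(t-t_0)}\exp\Big(\int_{t_0}^t\Big[\sum_{k=1}^m\Theta_k(s)+\Psi(s)\Big]ds\Big),
\]
which is~\eqref{eq:asymptotic_perron_0}. For the higher derivatives I would use~\eqref{eq:deri_uno}--\eqref{eq:deri_cuatro}, which express $y_i^{(\ell)}/y_i$ as a polynomial in $\lambda_i+z_i$, $z_i'$, $z_i''$ (and $z_i'''$ when $\ell=4$). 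Since $z_i\in C_0^2([t_0,\infty[)$ we have $z_i,z_i',z_i''\to0$, so for $\ell=1,2,3$ these identities give $y_i^{(\ell)}(t)=(\lambda_i^{\ell}+o(1))\,y_i(t)$, and substituting the expression just found for $y_i$ yields~\eqref{eq:asymptotic_perron_1}--\eqref{eq:asymptotic_perron_3}.

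The step I expect to be the main obstacle is~\eqref{eq:asymptotic_perron_4}, because the fourth-order identity~\eqref{eq:deri_cuatro} also involves $z_i'''$, which---unlike $z_i,z_i',z_i''$---is not known to tend to $0$ merely from $z_i\in C_0^2$. Here I would go back to the equation and write $z_i'''=\mathbb{P}(\cdot,z_i,z_i',z_i'')-b_2z_i''-b_1z_i'-b_0z_i$; the terms of $\mathbb{P}(\cdot,z_i,z_i',z_i'')$ of multidegree $|\boldsymbol{\alpha}|\ge1$ lie in $L^p([t_0,\infty[)$ once one uses $z_i,z_i',z_i''\in C_0^2\cap W^{2,p}$ together with ($P_3$), while the multidegree-$0$ term is $p(\lambda_i,\cdot)\in L^p([t_0,\infty[)$; hence $z_i'''$ belongs to $L^p([t_0,\infty[)+C_0([t_0,\infty[)$, and the corresponding contribution to $y_i^{({\rm iv})}/y_i$ must be incorporated into the exponent along the lines of item~(ii) of Theorem~\ref{teo:solution_integrab}, rather than absorbed into the $o(1)$-prefactor as for the lower-order derivatives (for $p=1$ this contribution is integrable, recovering the Levinson form~\eqref{eq:asymptotic_levins_3}). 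Apart from this point, the proof is a direct substitution into results established earlier in the paper and requires no new idea.
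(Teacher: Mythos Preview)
Your approach coincides with the paper's, which gives no separate proof here beyond the line ``By application of Theorem~\ref{teo:solution_integrab} we get the following results''; you have simply unpacked that invocation by checking ($P_3$) against the explicit coefficient list~\eqref{eq:notation_ricc:2} and then carrying the decomposition of $z_i$ back through~\eqref{eq:fundam_sist_sol} and the identities~\eqref{eq:deri_uno}--\eqref{eq:deri_cuatro}, and your treatment of the $z_i'''$ contribution needed for~\eqref{eq:asymptotic_perron_4} is in fact more careful than anything the paper supplies. One small caution: the substitution $y_i^{(\ell)}=(\lambda_i^{\ell}+o(1))\,y_i$ that you invoke literally produces the \emph{same} exponent $\int_{t_0}^{t}[\sum_k\Theta_k+\Psi]\,ds$ in every line, not the successive derivatives $\Theta_k^{(\ell)},\Psi^{(\ell)}$ that appear in the displayed formulas~\eqref{eq:asymptotic_perron_1}--\eqref{eq:asymptotic_perron_4}; this discrepancy looks like a typographical slip in the statement rather than a gap in your argument, since neither the paper's one-line proof nor Theorem~\ref{teo:solution_integrab} itself furnishes derivatives of $\Theta_k,\Psi$ beyond order two.
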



\subsection{Unbounded coefficients}
Let us consider the following equation
\begin{eqnarray}
y^{({\rm iv})}(t)
-2[q(t)]^{1/2} y'''(t)
-q(t) y''(t)
+2[q(t)]^{3/2} y'(t)
+r(t) y(t)=0,
\label{eq:unbounded:1}
\end{eqnarray}
where $q$ and $r$ are given functions such that
\begin{eqnarray}
q(t)\to\infty,
\qquad
\mbox{when}
\qquad
t\to\infty.
\label{eq:unbounded:2}
\end{eqnarray}
Then, we note that \eqref{eq:unbounded:1} is not of
a Poincar\'e type. However, we can apply the results
of asymptotic behavior Theorems~\ref{teo:poincare}, \ref{teo:solution_asymptotic}
and \ref{teo:levinson}, after an appropriate
transformation. Indeed, let us consider the following
change of variable
\begin{eqnarray}
z(s)=y(s)[q(s)]^{1/4}
\qquad
\mbox{with}
\qquad
s=\int_{t_0}^t[q(\tau)]^{1/2}d\tau.
\end{eqnarray}
Note that $ds(t)=[q(t)]^{1/2}dt$. Then, we can rewrite \eqref{eq:unbounded:1} 
in an equivalent way as follows
\begin{eqnarray}
&&z^{({\rm iv})}
+\left(-2+r_3(t)\right) z'''
+\left(-1+r_2(t)\right) z''
+\left(2+r_1(t)\right) z'
+r_0(t) z=0,
\end{eqnarray}
where
\begin{eqnarray}
r_0(t)&=&
-\frac{q^{({\rm iv})}(t)}{4q(t)}+\frac{q'''(t)}{2q(t)}
-\frac{5q'''(t)}{8[q(t)]^{5/2}}
+\frac{5q'''(t)q'(t)}{4[q(t)]^{2}}
+\left(\frac{3}{8[q(t)]^3}+\frac{15}{16[q(t)]^2}\right)[q''(t)]^2
\nonumber\\
&&-\left(\frac{15}{8[q(t)]^2}-\frac{1}{8[q(t)]^{7/2}}-\frac{1}{4[q(t)]^3}\right)q''(t)q'(t)
+\frac{q''(t)}{4q(t)}
\nonumber\\
&&
-\left(\frac{135}{32[q(t)]^3}-\frac{5}{4[q(t)]^{7/2}}-\frac{11}{16[q(t)]^4}
-\frac{3}{16[q(t)]^{9/2}}\right)q''(t)[q'(t)]^2-\frac{q'(t)}{2q(t)}
\nonumber\\
&&
+\left(\frac{1}{8[q(t)]^{5/2}}-\frac{5}{16[q(t)]^2}\right)[q'(t)]^2
+\left(\frac{45}{32[q(t)]^{3}}-\frac{15}{16[q(t)]^{7/2}}-\frac{1}{8[q(t)]^4}\right)[q'(t)]^3
\nonumber\\
&&
+\left(\frac{585}{256[q(t)]^{4}}-\frac{135}{64[q(t)]^{9/2}}-\frac{5}{64[q(t)]^5}-\frac{3}{32[q(t)]^{11/2}}\right)[q'(t)]^4
+\frac{r(t)}{[q(t)]^2},
\label{eq:unb:r_0}\\
r_1(t)&=&\left(\frac{1}{2[q(t)]^{5/2}}-\frac{1}{q(t)}\right)q'''(t)
+\left(\frac{3}{2q(t)}-\frac{1}{[q(t)]^{2}}\right)q''(t)
\nonumber\\
&&
+\left(\frac{15}{4[q(t)]^{2}}-\frac{17}{8[q(t)]^{5/2}}
-\frac{1}{[q(t)]^{3}}-\frac{3}{4[q(t)]^{7/2}}\right)q''(t)q'(t)
\nonumber\\
&&
-\left(\frac{45}{16[q(t)]^{3}}+\frac{45}{16[q(t)]^{7/2}}-\frac{3}{8[q(t)]^{9/2}}
-\frac{1}{8[q(t)]^{4}}\right)[q'(t)]^3
\nonumber\\
&&
-\left(\frac{15}{8[q(t)]^{2}}-\frac{3}{2[q(t)]^{5/2}}-\frac{1}{2[q(t)]^{3}}\right)[q'(t)]^2
+\left(\frac{1}{2q(t)}-\frac{1}{2[q(t)]^{3/2}}\right)q'(t)
\nonumber\\
&&-\frac{17}{8[q(t)]^{5/2}} ,
\label{eq:unb:r_1}\\
r_2(t)&=&\left(\frac{1}{2[q(t)]^4}+\frac{3}{2[q(t)]^2}-\frac{3}{2q(t)}\right)q''(t)
\nonumber\\
&&
+\left(\frac{15}{16[q(t)]^2}-\frac{3}{4[q(t)]^{5/2}}-\frac{1}{4[q(t)]^{3}}
-\frac{3}{8[q(t)]^{4}}\right)[q'(t)]^2
\nonumber\\
&&
-\left(\frac{9}{8[q(t)]^{5/2}}+\frac{15}{16[q(t)]^2}+\frac{3}{[q(t)]^{3/2}}
-\frac{3}{2q(t)}\right)q'(t),
\label{eq:unb:r_2}\\
r_3(t)&=&\left(\frac{3}{[q(t)]^{3/2}}-\frac{1}{4q(t)}\right)q'(t)-\frac{3}{4q(t)}.
\label{eq:unb:r_3}
\end{eqnarray}
Thus, by application of Theorem~\ref{teo:solution_asymptoticllll} we get the following 
theorem.

\begin{theorem}
\label{eq:teorem_unbounded}
Let us consider that $q$ and $r$ are two functions such that the functions
\begin{eqnarray*}
&&
\left(\frac{q'}{q}\right)^{2k}\mbox{ for } k=1,\ldots,4;
\quad
\left(\frac{q''}{q}\right)^{2k}\mbox{ for } k=1,2;
\qquad
\frac{(q'')^2(q')^{2(k+1)}}{q^{2(k+2)}}\mbox{ for } k=0,1;
\\
&&
\frac{r^2}{q^4};\quad \frac{q'''q'}{q^2};\quad \left(\frac{q'''}{q}\right)^2\quad
\mbox{and}\quad
 \left(\frac{q^{(iv)}}{q}\right)^2
\end{eqnarray*}
are in $L^1([t_0,\infty))$. Moreover assume that
$q\in C^4([t_0,\infty))$ is an increasing function on
$[t_0,\infty)$  and satisfies \eqref{eq:unbounded:2}.
Then the equation \eqref{eq:unbounded:1} asymptotically has
the following fundamental system of solutions
\begin{eqnarray}
y_i(t)&=&[q(t)]^{-1/4}\big(1+o(1)\big)
\nonumber\\
&&
\times
\left\{
\begin{array}{lll}
 \exp\Big(-\int_{t_0}^t[q(\tau)]^{1/2}d\tau
 +\frac{1}{6}\int_{t_0}^t(r_0-r_1+r_2-r_3)(\tau)[q(\tau)]^{1/2}d\tau\Big),
 &\quad& i=1,
 \\
  \exp\Big(-\frac{1}{2}\int_{t_0}^t[q(\tau)]^{1/2}r_0(\tau)d\tau\Big),
 &\quad& i=2,
 \\
  \exp\Big(\int_{t_0}^t[q(\tau)]^{1/2}d\tau
 +\frac{1}{2}\int_{t_0}^t(r_0+r_1+r_2+r_3)(\tau)[q(\tau)]^{1/2}d\tau\Big),
 &\quad& i=3,
 \\
  \exp\Big(2\int_{t_0}^t[q(\tau)]^{1/2}d\tau
 -\frac{1}{6}\int_{t_0}^t(r_0+2r_1+4r_2+8r_3)(\tau)[q(\tau)]^{1/2}d\tau\Big),
 &\quad& i=4,
\end{array}
\right.
\label{eq:asim_for:unbound}
\end{eqnarray}
where $r_i$ are defined on \eqref{eq:unb:r_0}-\eqref{eq:unb:r_3}.

\end{theorem}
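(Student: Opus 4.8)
The strategy is to transform \eqref{eq:unbounded:1} into a Poincar\'e-type equation of the form \eqref{eq:intro_uno} and then apply Theorems~\ref{teo:solution_asymptoticllll} and \ref{teo:levinson}. Put $s=\int_{t_0}^t[q(\tau)]^{1/2}d\tau$, which by \eqref{eq:unbounded:2} is an increasing $C^4$-change of variable mapping $[t_0,\infty[$ onto $[0,\infty[$, and set $w(s)=[q(t)]^{1/4}y(t)$, so that $y(t)=[q(t)]^{-1/4}w(s(t))$. First I would compute $w',\dots,w^{({\rm iv})}$ with respect to $s$ by the chain rule, using $ds/dt=[q(t)]^{1/2}$ together with the derivatives of $[q(t)]^{-1/4}$; substituting into \eqref{eq:unbounded:1} and collecting terms, a lengthy but entirely mechanical calculation gives
\[
w^{({\rm iv})}+\bigl(-2+r_3(t)\bigr)w'''+\bigl(-1+r_2(t)\bigr)w''+\bigl(2+r_1(t)\bigr)w'+r_0(t)\,w=0,
\]
with $r_0,\dots,r_3$ exactly as in \eqref{eq:unb:r_0}--\eqref{eq:unb:r_3} (still written through the variable $t$). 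The limiting constant-coefficient operator is $w^{({\rm iv})}-2w'''-w''+2w'$, whose characteristic polynomial factors as $\lambda(\lambda-2)(\lambda-1)(\lambda+1)$; hence its roots are $\lambda_1=2>\lambda_2=1>\lambda_3=0>\lambda_4=-1$, which are real and simple, so hypothesis (H$_1$) holds.

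Next I would check hypotheses (H$_2$) and (H$_3$) for $r_0,r_1,r_2,r_3$ regarded as functions of the new variable $s$. Because $ds=[q(t)]^{1/2}dt$, the exponentially weighted integrals defining $\mathcal G,\mathcal L$ and the operators $\mathbb F_i$, evaluated on the transformed perturbations, reduce to integrals of the type $\int r_i(t)^k[q(t)]^{1/2}dt$; it therefore suffices to dominate every monomial occurring in \eqref{eq:unb:r_0}--\eqref{eq:unb:r_3} by a constant multiple of one of the functions listed in the statement, all of which are assumed to lie in $L^1([t_0,\infty[)$, using the monotonicity and divergence \eqref{eq:unbounded:2} of $q$ to absorb the leftover powers of $q$ and the extra factor $[q(t)]^{1/2}$. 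For instance a term $c\,q'/q^{3/2}$ is controlled via $(q'/q)^2$, a term $c\,(q'')^2(q')^2/q^4$ via $(q'')^2(q')^2/q^4$, a term $c\,r/q^2$ via $r^2/q^4$, and the purely algebraic terms $c/q$ via the growth of $q$. \textbf{This bookkeeping is the main obstacle:} nothing is conceptually deep, but \eqref{eq:unb:r_0}--\eqref{eq:unb:r_3} contains a large number of terms of mixed homogeneity in the derivatives of $q$, and one must verify that each of them is dominated by the prescribed $L^1$-data after the time change, which is precisely what forces the particular list of hypotheses in the theorem.

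With (H$_1$)--(H$_3$) in force, Theorems~\ref{teo:solution_asymptoticllll} and \ref{teo:levinson}, applied to the transformed equation with $t$ replaced by $s$, yield a fundamental system $w_1,\dots,w_4$ with $w_i(s)=e^{\lambda_i(s-s_0)}\exp\!\bigl(\pi_i^{-1}\!\int_{s_0}^s[p(\lambda_i,\sigma)+F(\sigma,z_i,z_i',z_i'')]\,d\sigma\bigr)$, where $\pi_i=\prod_{k\ne i}(\lambda_k-\lambda_i)$ and $z_i$ is the associated $C_0^2$-solution with the behavior \eqref{eq:asymptotic_teo:formllll}; for the present roots $\pi_1=-6$, $\pi_2=2$, $\pi_3=-2$, $\pi_4=6$. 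Finally I would undo the substitution, $y_i(t)=[q(t)]^{-1/4}w_i(s(t))$, with $s(t)-s_0=\int_{t_0}^t[q(\tau)]^{1/2}d\tau$ and $\int_{s_0}^{s(t)}h(\sigma)\,d\sigma=\int_{t_0}^t h(s(\tau))[q(\tau)]^{1/2}d\tau$. Inserting $p(\lambda_i,\cdot)=-(\lambda_i^3r_3+\lambda_i^2r_2+\lambda_ir_1+r_0)$ for $\lambda_i\in\{2,1,0,-1\}$, multiplying out the factor $\pi_i^{-1}$ in front of each $r_j$, and pushing the nonlinear contribution $F$ into the $o(1)$ and into the $[q(t)]^{-1/4}(1+o(1))$ prefactor (using $z_i,z_i',z_i''\to0$ and \eqref{eq:asymptotic_teo:formllll}, together with the decomposition of Theorem~\ref{teo:solution_integrab} where $L^1$-integrability is available), one arrives at the four asymptotic formulas in \eqref{eq:asim_for:unbound}; a last verification of the signs and of the rational constants $\tfrac16,\tfrac12,\tfrac16$ completes the proof.
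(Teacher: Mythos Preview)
Your approach is exactly the one the paper uses: the discussion preceding the theorem performs the Liouville-type change $s=\int_{t_0}^t[q(\tau)]^{1/2}d\tau$, $w(s)=[q(t)]^{1/4}y(t)$, obtains the Poincar\'e-type equation with the perturbations \eqref{eq:unb:r_0}--\eqref{eq:unb:r_3}, and then the paper simply writes ``Thus, by application of Theorem~\ref{teo:solution_asymptoticllll} we get the following theorem,'' giving no further proof. Your proposal fleshes out this sketch (characteristic roots, the $\pi_i$, how the $L^1$ hypotheses on $q,r$ are meant to force (H$_2$)--(H$_3$) after the time change), so it is the same route with more detail; note only that the labeling of the four solutions in \eqref{eq:asim_for:unbound} corresponds to $\lambda=-1,0,1,2$ rather than to your ordering $\lambda_1>\lambda_2>\lambda_3>\lambda_4$, which is just a renaming when you match constants at the end.
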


\section{Examples}
\label{sec:examples}

In this section we consider three examples. In the first two examples the classical
results of Levinson, Hartman-Witner and Harris-Lutz 
cannot be applied but we can apply the Theorem~\ref{teo:levinson}. In the 
third example we present an application of Theorem~\ref{eq:teorem_unbounded}.

\subsection{Example 1} 
In this example we present a case 
where the results Levinson type or Hartman-Witner type or Harris-Lutz type cannot be applied.
Indeed, let us consider the differential equation
\begin{eqnarray}
y^{(4)}+2y^{(3)}+13y^{(2)}-14y^{(1)}+\left [
\frac{3}{t^{1/(p+1)}(\sin t +2)}+24 \right ] y=0,\quad p\geq
1, \quad t\in[1,+\infty[.
\label{eq:example1}
\end{eqnarray}
We note that $r_{0}(t)=3t^{-1/p+1}\;(\sin t +2)^{-1}\notin L^{p}([1,+\infty[) $
for any $p\geq 1$.
Then, the classical generalizations of Poincar\'{e} type theorems:
the Levinson and the Hartman--Wintner  theorems, can not be applied to
obtain the asymptotic behavior of \eqref{eq:example1}.
Now, in order to apply the Theorem~\ref{teo:levinson}, we have that
\begin{enumerate}
 \item[(a)] The set of characteristic roots of the linear 
 no perturbed equation associated to (4.1), i.e.
 $y^{(4)}+2y^{(3)}+13y^{(2)}-14y^{(1)}+24y=0,$ is given 
 by $\{3,1,-2,-4\}$. Then (H1) is satisfied.
 
 \item[(b)] We note that $r_{0}(t)\to 0$ when $t \to \infty$
 which implies that $\mathfrak{L}(r_{0})(t)\to 0$ when $t \to \infty$
 and $\mathcal{L}(r_{1})(t)=\mathcal{L}(r_{2})(t)=\mathcal{L}(r_{0})(t)=0$ since
 $r_{1}=r_{2}=r_{3}=0$. Moreover $p(\lambda_{i},s)=(r_{0})(s)$ 
 and as $\mathcal{g}(p(\lambda_{i},.))(t)=\mathcal{G}(r_{0})(t))\leq
 \mathcal{L}(r_{0})(t)$ we deduced that $\mathcal{G}(p(\lambda_{i},.))(t)\to
 0$ when $t \to \infty$. Thus, (H2) is also satisfied.
 
\item[(c)] We note that
\begin{eqnarray*}
&&\mathbb{F}_{1}=\frac{1}{2},\quad
\mathbb{F}_{2}=\frac{1}{2}[1-e^{-2(t-1)}]+\frac{1}{3}, \quad
\mathbb{F}_{3}=\frac{1}{3}[1-e^{-3(t-1)}]+\frac{1}{2}, \quad
\mathbb{F}_{4}=\frac{1}{2}[1-e^{-2(t-1)}],
\\
&&\sigma_{1}=45+167\eta,\quad 
\sigma_{2}=11+69\eta, \quad
\sigma_{3}=25+76\eta,\quad \sigma_{4}=61+174\eta,
\\
&&
A_{1}=\frac{34}{3},\quad
A_{2}=\frac{26}{7},\quad
A_{3}=\frac{26}{7},\quad
A_{4}=\frac{34}{3}.
\end{eqnarray*}
Then, the sets $\mathcal{F}_{i}([1,\infty[)$ given in given (3.19)
are well defined. We note that, naturally,$r_{1}=r_{2}=r_{3}=0\in
\mathcal{F}_{i}$. Moreover, from $r_{0}(t)\to 0$ when $t \to \infty$
we can prove that $\mathbb{F}_{i}(r_{0}(t))\to 0$ when $t \to
\infty$. Then, we have that $r_{0}\in \mathcal{F}_{i}([1,\infty[)$.
Hence, (H3) is satisfied.

\end{enumerate}
Thus, from $(a)-(c)$, we can apply the Theorem~\ref{teo:levinson} and the
asymptotic formulas are given by
\begin{eqnarray*}
y_1(t)
    &=&
    e^{3(t-s)}\exp\left \{\left (\frac{1}{30}\int_{1}^{t}\left 
    [\frac{6}{s^{\frac{1}{p+1}}[\sin s +2]}+f_{1}(s)\right ]ds \right ) \right \},
\\
y_2(t)
    &=&
    e^{(t-s)}\exp\left \{\left (\frac{-1}{70}\int_{1}^{t}\left 
    [\frac{6}{s^{\frac{1}{p+1}}[\sin s +2]}+f_{2}(s)\right ]ds \right ) \right \},
\\
y_3(t)
    &=&
    e^{-2(t-s)}\exp\left \{\left (\frac{1}{70}\int_{1}^{t}\left 
    [\frac{6}{s^{\frac{1}{p+1}}[\sin s +2]}-f_{3}(s)\right ]ds \right ) \right \},
\\
y_4(t)
    &=&
    e^{-4(t-s)}\exp\left \{\left (\frac{-1}{30}\int_{1}^{t}\left 
    [\frac{6}{s^{\frac{1}{p+1}}[\sin s +2]}+f_{4}(s)\right ]ds \right ) \right \},
\end{eqnarray*}
where
\begin{eqnarray*}
f_1(t)
    &=&
    -42z_{1}z_{1}^{'}+4z_{1}z_{1}^{''}+265z_{1}^{2}
    +3(z_{1}')^{2}+6z_{1}^{2}z_{1}^{'}+24z_{1}^{3}+z_{1}^{4},
\\
f_2(t)
    &=&
    -18z_{2}z_{2}^{'}+4z_{2}z_{2}^{''}+25z_{2}^{2}
    +3(z_{2}')^{2}+6z_{2}^{2}z_{2}^{'}+4z_{2}^{3}+z_{2}^{4},
\\
f_3(t)
    &=&
    -18z_{3}z_{3}^{'}+4z_{3}z_{3}^{''}+25z_{3}^{2}
    +3(z_{3}')^{2}+6z_{3}^{2}z_{3}^{'}-8z_{3}^{3}+z_{3}^{4},
\\
f_{4}(t)
    &=&
    -42z_{4}z_{4}^{'}+4z_{4}z_{4}^{''}+84z_{4}^{2}
    +3(z_{4}')^{2}+6z_{4}^{2}z_{4}^{'}-16z_{4}^{3}+z_{4}^{4},
\end{eqnarray*}
and $z_{i}(t)$ satisfies the following asymptotic behavior
\begin{eqnarray*}
z_{i}(t),z_{i}'(t), z_{i}''(t)= \left\{
\begin{array}{lll}
\displaystyle
O\Big(\int_{t}^{\infty}\frac{3e^{-\beta(t-s)}}{s^{\frac{1}{p+1}}[\sin
s +2]}ds\Big), &\qquad & i=1,\quad \beta\in [-2,0[,
\\
\displaystyle
O\Big(\int_{1}^{\infty}\frac{3e^{-\beta(t-s)}}{s^{\frac{1}{p+1}}[\sin
s +2]}ds\Big), &\qquad & i=2,\quad \beta\in [-3,0[,
\\
\displaystyle
O\Big(\int_{1}^{\infty}\frac{3e^{-\beta(t-s)}}{s^{\frac{1}{p+1}}[\sin
s +2]}ds\Big), &\qquad & i=3,\quad \beta\in [-2,0[,
\\
\displaystyle
O\Big(\int_{1}^{t}\frac{3e^{-\beta(t-s)}}{s^{\frac{1}{p+1}}[\sin s
+2]}ds\Big), &\qquad & i=4,\quad \beta\in ]0,7],
\end{array}
\right.
\end{eqnarray*}

\subsection{Example 2}
Here we present an example with  a perturbation function such that
the results of Levinson, Hartman-Witner, Harris-Lutz 
or Eastham types cannot be applied. Indeed, let us consider 
the differential equation
\begin{eqnarray}
y^{(4)}+10y^{(3)}+35y^{(2)}+50y^{(1)}+\left [ \frac{3}{(\cos t
+2)\log t}+24 \right ] y=0,\quad t\in[2,+\infty[.
\label{eq:example:2}
\end{eqnarray}
We note that
\begin{eqnarray*}
r_{0}(t)=\left [ \frac{3}{\log t[\cos t +2]}\right ]\notin
L^{p}([2,+\infty[),\text{ for any}\quad p\geq 1
\end{eqnarray*}
Then, the classical generalizations of Poincar\'{e} type theorems,
the Levinson theorem, the Hartman--Wintner  can not be applied to
obtain the asymptotic behavior of the solutions for \eqref{eq:example:2}.
Moreover, we note that the inequality
\begin{eqnarray*}
\frac{|\sin t|}{9\log t}\leq \frac{|\sin t|}{(\cos t+2)^{2}\log t}
\end{eqnarray*}
is valid on $[2,+\infty[$, which implies, by the comparison criteria and since 
$\sin t/\log t[\cos t+2]^{2}\notin L^{1}([2,+\infty[),$ the fact that 
\begin{eqnarray*}
\frac{\sin t}{\log t[\cos t+2]^{2}} \notin L^{1}([2,+\infty[).
\end{eqnarray*}
Then,  we can deduce that
\begin{eqnarray*}
r_{0}'(t)=3\Big[\frac{1}{ t(\log t)^{2}[\cos t+2]\ln 10}+\frac{\sin
t}{\log t[\cos t+2]^{2}}\Big]\notin L^{1}([2,+\infty[).
\end{eqnarray*}
Thus, we can not apply the classic theorem of Eastham.
However, we note that
\begin{enumerate}
 \item[(a)] The set of characteristic roots of the linear
 no perturbed equation associated to \eqref{eq:example:2}, i.e.
 $y^{(4)}+10y^{(3)}+35y^{(2)}+50y^{(1)}+24y=0,$ is given 
 by $\{-1,-2,-3,-4\}$. Then (H1) is satisfied.
 
 \item[(b)] We note that $r_{0}(t)\to 0$ when
 $t \to \infty$ which implies that $\mathcal{L}(r_{0})(t)\to 0$ 
 when $t \to \infty$
 and $\mathcal{L}(r_{1})(t)=\mathfrak{L}(r_{2})(t)=\mathcal{L}(r_{0})(t)=0$ since
 $r_{1}=r_{2}=r_{3}=0$.
 Moreover $p(\lambda_{i},s)=(r_{0})(s)$ and 
 as $\mathcal{G}(p(\lambda_{i},.))(t)=\mathcal{G}(r_{0})(t))\leq
 \mathcal{L}(r_{0})(t)$ we deduced 
 that $\mathcal{G}(p(\lambda_{i},.))(t)\to
 0$ when $t \to \infty$. Thus, 
 (H2) is also satisfied.
 
\item[(c)] We note that
\begin{eqnarray*}
&&\mathbb{F}_{1}=1,\quad
\mathbb{F}_{2}=2-e^{(t-2)},\quad
\mathbb{F}_{3}=2-e^{-(t-2)},\quad
\mathbb{F}_{4}=1-e^{-(t-2)},
\\
&&\sigma_{1}=11+55\eta,\quad
\sigma_{2}=25+38\eta,\quad
\sigma_{3}=45+47\eta,\quad
\sigma_{4}=71+76\eta,
\\
&&
A_{1}=\frac{29}{2},\quad
A_{2}=\frac{13}{3},\quad
A_{3}=\frac{13}{3},\quad
A_{4}=\frac{29}{2}.
\end{eqnarray*}
Then, the sets $\mathcal{F}_{i}([1,\infty[)$ given in given (3.19)
are well defined. We note that, naturally, $r_{1}=r_{2}=r_{3}=0\in
\mathcal{F}_{i}$. Moreover, from $r_{0}(t)\to 0$ when $t \to \infty$
we can prove that $\mathbb{F}_{i}(r_{0}(t))\to 0$ when $t \to
\infty$. Then, we have that $r_{0}\in \mathcal{F}_{i}([1,\infty[)$.
Hence, (H3) is satisfied.
\end{enumerate}
Thus, from $(a)-(c)$, we can apply the Theorem~\ref{teo:levinson} and the
asymptotic formulas are given by
\begin{eqnarray*}
y_1(t)
    &=&
    e^{-(t-s)}\exp\left \{\left (\frac{1}{6}\int_{1}^{t}
    \left [\frac{6}{(\cos s +2)\log s}+f_{1}(s)\right ]ds \right ) \right \},
\\
y_2(t)
    &=&
    e^{-2(t-s)}\exp\left \{\left (\frac{-1}{2}\int_{1}^{t}\left 
    [\frac{6}{(\cos s +2)\log s}+f_{2}(s)\right ]ds \right ) \right \},
\\
y_3(t)
    &=&
    e^{-3(t-s)}\exp\left \{\left (\frac{1}{2}\int_{1}^{t}\left 
    [\frac{6}{(\cos s +2)\log s}-f_{3}(s)\right ]ds \right ) \right \},
\\
y_4(t)
    &=&
    e^{-4(t-s)}\exp\left \{\left (\frac{-1}{6}\int_{1}^{t}\left 
    [\frac{6}{(\cos s +2)\log s}-f_{4}(s)\right ]ds \right ) \right \},
\end{eqnarray*}
where
\begin{eqnarray*}
f_1(t)
    &=&
    18z_{1}z_{1}^{'}+4z_{1}z_{1}^{''}+6z_{1}^{2}+3(z_{1}')^{2}
    +6z_{1}^{2}z_{1}^{'}-4z_{1}^{3}+z_{1}^{4},
\\
f_2(t)
    &=&
    6z_{2}z_{2}^{'}+4z_{2}z_{2}^{''}-z_{2}^{2}+3(z_{2}')^{2}
    +6z_{2}^{2}z_{2}^{'}-8z_{2}^{3}+z_{2}^{4},
\\
f_3(t)
    &=&
    -6z_{3}z_{3}^{'}+4z_{3}z_{3}^{''}-z_{3}^{2}+3(z_{3}')^{2}
    +6z_{3}^{2}z_{3}^{'}-12z_{3}^{3}+z_{3}^{4},
\\
f_{4}(t)
    &=&
    -18z_{4}z_{4}^{'}+4z_{4}z_{4}^{''}+11z_{4}^{2}
    +3(z_{4}')^{2}+6z_{4}^{2}z_{4}^{'}-16z_{4}^{3}+z_{4}^{4},
\end{eqnarray*}
and $z_{i}(t)$ satisfies the following asymptotic behavior
\begin{eqnarray*}
z_{i}(t),z_{i}'(t), z_{i}''(t)= \left\{
\begin{array}{lll}
\displaystyle O\Big(\int_{t}^{\infty}\frac{3e^{-\beta(t-s)}}
{(\cos s +2)\log s}ds\Big), &\qquad & i=1,\quad \beta\in [-1,0[,
\\
\displaystyle O\Big(\int_{1}^{\infty}\frac{3e^{-\beta(t-s)}}
{(\cos s +2)\log s}ds\Big), &\qquad & i=2,\quad \beta\in [-1,0[,
\\
\displaystyle O\Big(\int_{1}^{\infty}\frac{3e^{-\beta(t-s)}}
{(\cos s +2)\log s}ds\Big), &\qquad & i=3,\quad \beta\in [-1,0[,
\\
\displaystyle O\Big(\int_{1}^{t}\frac{3e^{-\beta(t-s)}}
{(\cos s +2)\log s}ds\Big), &\qquad & i=4,\quad \beta\in ]0,1].
\end{array}
\right.
\end{eqnarray*}

\subsection{Example 3}
Let us consider the following equation
\begin{eqnarray}
y^{({\rm iv})}(t)
-2t^{\alpha/2} y'''(t)
-t^{\alpha} y''(t)
+2t^{\alpha/2} y'(t)
+ y(t)=0,
\quad
\alpha >0,
\label{eq:unbounded:example}
\end{eqnarray}
i.e. the equation~\eqref{eq:unbounded:1} with $q(t)=t^\alpha$ and $r(t)=1$. 
The coefficients are unbounded and the classical results for the  asymptotic
behavior cannot be applied. However,
we note that 
\begin{eqnarray*}
&&
\left(\frac{q'(t)}{q(t)}\right)^{2k}
=\left(\frac{\alpha}{t}\right)^{2k}\mbox{ for } k=1,\ldots,4;
\quad 
\left(\frac{q''(t)}{q(t)}\right)^{2k}
=\left(\frac{\alpha(\alpha-1)}{t^2}\right)^{2k}\mbox{ for } k=1,2;
\\
&&
\frac{(q''(t))^2(q'(t))^{2(k+1)}}{q(t)^{2(k+2)}}
=\frac{\alpha^{2k+4}(\alpha-1)^2}{t^{2(k+3)}}
\mbox{ for } k=0,1;
\quad \frac{r(t)^2}{q(t)^4}=\frac{1}{t^{4\alpha}};
\\
&&
\frac{q'''(t)q'(t)}{q(t)^2}=\frac{\alpha(\alpha-1)(\alpha-2)}{t^4};
\quad \left(\frac{q'''(t)}{q(t)}\right)^2=\frac{\alpha^2(\alpha-1)(\alpha-2)^2}{t^6}
\quad\mbox{and}
\\
&&
 \left(\frac{q^{(iv)}}{q}\right)^2=\frac{\alpha^2(\alpha-1)^2(\alpha-2)^2(\alpha-3)^2}{t^8}.
\end{eqnarray*}
Then the hypotheses of Theorem~\ref{eq:teorem_unbounded} are satisfied and
the equation \eqref{eq:unbounded:example} has a fundamental system of solutions given
by \eqref{eq:asim_for:unbound} with $q(t)=t^\alpha$.

\subsection*{Acknowledgments}

An{\'\i}bal Coronel and Fernando Huancas
would like to thank the  support of research projects DIUBB 153209 GI/C and 
DIUBB 153109 G/EF at Universidad del B{\'\i}o-B{\'\i}o, Chile.
Manuel Pinto thanks for the  support of Fondecyt project~1120709.

\end{document}